\newcommand{\Cc}{\mathbb{C}}
\newcommand{\Zz}{\mathbb{Z}}
\newcommand{\Rr}{\mathbb{R}}
\newcommand{\N}{\mathbb{N}}
\renewcommand{\H}{\mathcal{H}}
\newcommand{\J}{\mathcal{J}}
\newcommand{\inn}[2]{\langle#1,\,#2\rangle}
\DeclareMathOperator{\ran}{ran}
\DeclareMathOperator{\supp}{supp}
\DeclareMathOperator{\dist}{dist}
\newcommand{\grad}{\nabla}
\newcommand{\Lap}{\Delta}
\newcommand{\di}{\partial}
\newcommand{\cl}{\overline}
\DeclareMathOperator{\diam}{diam}
\newcommand{\s}{\sigma}
\newcommand{\eps}{\epsilon}
\newcommand{\g}{\gamma}
\newcommand{\al}{\alpha}
\renewcommand{\b}{\bar}
\renewcommand{\eqref}[1]{(\textup{\ref{#1}})} 
\theoremstyle{thmstyleone}%
\newtheorem{theorem}{Theorem}
\newtheorem{lemma}{Lemma}
\theoremstyle{thmstyletwo}%
\newtheorem{remark}{Remark}%
\theoremstyle{thmstylethree}%
\newtheorem{definition}{Definition}%
\newcommand{\abs}[1]{\left\lvert#1\right\rvert}
\newcommand{\norm}[1]{\left\lVert#1\right\rVert}
\newcommand{\sbr}[1]{\left[#1\right]}
\newcommand{\Set}[1]{\left\{#1\right\}}
\newcommand{\fullfunction}[5]{\ensuremath{
		\begin{array}{ccrcl}
			{#1}    & \colon  & {#2} & \longrightarrow & {#3} \\
			\mbox{} & \mbox{} & {#4} & \longmapsto     & {#5}
\end{array}}}
\newcommand{\pd}[2]{\ensuremath{\tfrac{\partial#1}{\partial#2}}}
\newcommand{\od}[2]{\ensuremath{\tfrac{d#1}{d#2}}}
\newcommand{\md}[6]{\ensuremath{
		\ifinner
		\tfrac{\partial{^{#2}}#1}{\partial{#3^{#4}}\partial{#5^{#6}}}
		\else
		\tfrac{\partial{^{#2}}#1}{\partial{#3^{#4}}\partial{#5^{#6}}}
		\fi
}}
\newcommand{\del}[1]{\left(#1\right)}
\newcommand{\thmref}[1]{Theorem~\ref{#1}}
\newcommand{\defnref}[1]{Definition~\ref{#1}}
\newcommand{\lemref}[1]{Lemma~\ref{#1}}
\begin{document}
	
	\title{Adiabatic approximation for the motion of Ginzburg-Landau vortex filaments}
	\author{Jingxuan Zhang}
	\maketitle

	\abstract{	
		In this paper, we consider the concentration property of solutions to the dispersive Ginzburg-Landau (or Gross-Pitaevskii) equation in three dimensions.  On a spatial domain, it has long been conjectured that such a solution concentrates near some curve evolving according to the binormal curvature flow, and conversely, that a curve moving this way can be realized in a suitable sense by some solution to the dispersive Ginzburg-Landau equation. Some partial results are known with rather strong symmetry assumptions. 
		
		Our main theorems here provide affirmative answer to both conjectures under certain small curvature assumption. The results are valid for small but fixed material parameter in the equation, in contrast to the general practice to take this parameter to its zero limit. The advantage is that we can retain precise description of the vortex filament structure. The results hold on a long but finite time interval, depending on the curvature assumption.}

	\section{Introduction}
	Consider the dispersive Ginzburg-Landau  (or Gross-Pitaevskii) equation  on a  spatial domain $\Omega\subset \Rr^3$,
	
	\begin{equation}
		i\pd{\psi}{t} =-\Lap \psi+\frac{1}{\epsilon^2}(\abs{\psi}^2-1)\psi.
		\label{1.1}
	\end{equation}
	This equation has its origin from condensed matter physics.
	The complex scalar field $\psi:\Omega\to\Cc$ describes the Bose-Einstein
	condensate in superfluidity, or, ignoring the effect of magnetic
	field, the electronic condensate in superconductivity. 
	Here $\epsilon>0$ is a material parameter measuring the characteristic length scale of  $\abs{\psi}$. 
	
	Realistically, the coherent length
	$\epsilon$ is very small compared to the size of the domain $\Omega$.
	Thus in what follows we take 
	\begin{equation}
		\label{Om}
		\Omega:=\Set{(x,z):x\in\omega\subset\Rr^2, z\in I:=[0,1]},
	\end{equation}
	where $\omega$ is a bounded
	domain in $\Rr^2$, whose size is large compared to the scale $\epsilon$. 
	The choice $I=[0,1]$ can be replaced by any interval $[a,b]$ with $b-a\gg\eps$,
	which amounts to a change of coordinate along $z$-direction. 
	The reason for taking a bounded domain is to 
	circumvent certain undesirable decay properties
	that are not essential to our argument
	(see Sec. \ref{sec:2.1} for details). 
	In addition, for technical reason we also assume $\omega$ is star-shaped 
	around the origin, and the boundary $\di\omega$ is smooth. 
	
	On this cylindrical domain $\Omega$, we impose the boundary conditions 
	$\abs{\psi}\to 1 $ as $x$ approaches $\di\omega$ horizontally, and $\psi(x,0)=\psi(x,1)$
	for every $x\in \omega$.  
	Note that through the transform $\psi=e^{i\epsilon^{-2}t}u$, equation \eqref{1.1} 
	becomes the cubic defocusing subcritical nonlinear Schr\"odinger equation.
	Global well-posedness for such equations  has been known since the eighties \cite{MR533218}.
	Using  standard blow-up arguments, one can show that 
	for every initial configuration $u_0\in H^1(\Omega)$ satisfying
	the boundary conditions above, there exists 
	a unique global solution $u_t\in H^1(\Omega)$ to \eqref{1.1}
	generated by $u_0$. 
	Therefore in this paper we will mainly work with
	Sobolev spaces $H^k,\,k\ge1$.
	
	\subsection{The geometric structure of \eqref{1.1}}
	\label{sec:1.1}
	
	The dispersive Ginzburg-Landau equation \eqref{1.1} can be viewed as a Hamiltonian system as follows.
	Let   $X\subset L^2(\Omega,\Cc)$ be a suitable configuration space for \eqref{1.1}.
	If we identify the space $L^2(\Omega,\Cc)=L^2(\Omega,\Rr)\times  L^2(\Omega,\Rr)$
	through $u\mapsto (\Re u,\Im u)$, then we can regard $X$ as 
	a real vector space,
	equipped with
	the inner product $\inn{(\Re u,\Im u)}{(\Re v,\Im v)}=\int(\Re u\Re v+\Im u\Im v).$
	
	Under this identification, the operator 
	$J:\psi\mapsto -i\psi$ can be represented by the symplectic matrix
	\begin{equation}
		\label{4.0}
		J=\begin{pmatrix}0&1\\-1&0\end{pmatrix}.
	\end{equation}
	Thus we say $J$ is a symplectic operator, in the sense that 
	it induces a symplectic form $\tau=\inn{J\cdot}{\cdot}$ on $X$, satisfying
	$\tau({w},{w'})=-\tau({w'},{w})$ for every $w,w'\in X$. 
	
	Using the operator $J$, we can write \eqref{1.1} as the Hamiltonian system
	\begin{equation}
		\label{1.2}
		\pd{\psi}{t}=JE'(\psi),\quad E(\psi)=E_\Omega^\epsilon(\psi):=\int_\Omega \frac{1}{2}\abs{\grad\psi}^2+\frac{1}{4\epsilon^2}\del{\abs{\psi}^2-1}^2.
	\end{equation}
	Here $E$ is the Ginzburg-Landau energy, measuring the difference 
	of Helmholtz free energy between a transition phase $\psi$ and the normal phase.
	$E'(\psi)$ denotes the $L^2$-gradient of the energy functional w.r.t.  the real inner product given above. 
	This gradient $E'(\psi)$ is given explicitly as the r.h.s. of \eqref{1.1}.  
	
	It is well-known by now that as the material parameter $\epsilon\to0$ in
	\eqref{1.1}, the energy $E_\Omega^\epsilon$ of a
	finite-energy configuration $\psi$ 
	concentrates  near  some lower dimensional
	manifold $\gamma$. 
	This
	concentration phenomenon can be made precise
	in measure theoretic terms. For instance, see \cite{MR1491752,MR1864830,MR2195132}. 
	This leads one to expect, at least at the heuristic level,
	that as $\epsilon\to0$, \eqref{1.2} reduces, in an appropriate manner, to
	a Hamiltonian system in the space of curves, 
	\begin{equation}
		\label{1.3}
		\pd{\gamma}{t}=\J L'(\gamma),
	\end{equation}
	where $\gamma=\gamma_t$ is a $C^1$ path of curves in $\Omega$,
	and $\J$ is a suitable (symplectic) operator.

	\subsection{Outline of the main results}
	Suppose $\gamma$ in \eqref{1.3} is parametrized by arclength, $L(\gamma)=\int\abs{\di_s\gamma}^2$, $L'(\gamma)=-\di_{ss}\gamma$,  and $\J:=-\di_s\gamma\times$.
	Then \eqref{1.3} becomes the binormal curvature flow \eqref{BNF}. 
	The purpose of this paper is to formulate rigorously the connection
	between the dispersive Ginzburg-Landau equation \eqref{1.1} and the binormal curvature flow.
	
	To this end, we first make precise the notion of
	concentration set in \lemref{lem2.2} 	for a class of low energy configurations.
	Then our first main result, \thmref{thm3.1}, states that if $u_t$ is a solution
	to \eqref{1.1} generated by $u_0$, and $u_0$ has concentration set $\gamma_0$,
	then the flow $u_t$ has concentration set given in the leading order by $\gamma_t$, the flow generated by $\gamma_0$ under the binormal
	curvature flow \eqref{BNF}.
	As a corollary, we derive the second main result, \thmref{thm3.2}, which states the converse: If $\gamma_t$ is the flow generated by $\gamma_0$,
	then we can find some solution $u_t$ to \eqref{1.1} such that the concentration
	set associated to $u_t$ is given in the leading order by $\gamma_t$. 
	Both main results hold on some long but possibly finite time interval
	depending on the initial configuration.
	
	The precise statements of the main results of this paper are as follows:
	\begin{theorem}
		\label{thm0}
		\begin{enumerate}
			\item For any $\epsilon>0$, there are $\delta_1,\,\delta_2\ll\epsilon$ such that the following holds:
			Let $M$ be the manifold of approximate vortex filaments as in \defnref{M}.
			Let $u_0\in X^1+\psi_0$ be an initial configuration in
			the energy space ($X^k$ and  $\psi_0$ are defined in Sec. \ref{sec:2.2}), 
			such that $\dist_{X^2}(u_0, M)<\delta_2$.
			Let $u_t$ be the flow generated by $u_0$ under \eqref{1.1}.

			Then there is some $T>0$ independent of $\epsilon$ and $\delta_1,\,\delta_2$, 
			such that for $\epsilon t\le T$, there exists a flow of moduli $\s_t$ asscociated to $u_t$ as in \lemref{lem2.2}, with
			\begin{equation}
				\norm{u_t-f(\s_t)}_{X^2}=o(\sqrt{\delta_1}),
			\end{equation}
			where $f:\Sigma_{\delta_1}\to M$ is the parametrization defined in \eqref{2.1}.
			
			Moreover,  for $\epsilon t\le T$,  the flow of moduli
			moduli $\s_t$ evolves according 
			to 
			\begin{equation}
				\di_t\sigma=\J_\sigma^{-1}d_\sigma E(f(\sigma))+o_{\norm{\cdot}_{Y^0}}(\delta_1),
			\end{equation}
			where the operator $\J_\s$ is defined in \eqref{2.3}.
			
			\item	For any $\beta>0$, there exist $\delta,\epsilon_0>0$ such that the following holds: Let $\epsilon<\epsilon_0$ in \eqref{1.1}. 
			Let $\sigma_0=(\lambda_0,\g_0)\in\Sigma_\delta$ be given,
			where $\Sigma_\delta$ is defined in \eqref{Si}.
			Let $\vec\g_t$ be
			the flow generated by $(\gamma_0(z),z)$ under the binormal curvature flow \eqref{BNF}.
			
			Then there exist a solution $u_t$ to \eqref{1.1},
			and  some $T>0$ independent of $\epsilon$ and $\delta$, 
			such that for all $\epsilon t\le T$, $X\in C^1_c(\Rr^3,\Rr^3)$ and $\phi\in C^1_c(\Rr^3,\Rr)$ with $\norm{X}_{C^1},\,\norm{\phi}_{C^1}=O(\delta^{-1/4})$, we have
			\begin{align}
				&\abs{\int_\Omega X\times Ju-\pi\int_{\vec\g_t}X}\le \beta,\\
				&\abs{\int_\Omega \frac{e(u)}{\abs{\log\epsilon}}\phi-\pi\int_{\vec\g_t}\phi\,d \H^1}\le \beta  ,
			\end{align}
			where for $\psi=\psi^1+i\psi^2$,
			$$
			e(\psi)=	\frac{1}{2}\abs{\grad\psi}^2+\frac{1}{4\epsilon^2}(\abs{\psi}^2-1)^2,\quad  J\psi=\grad\psi^1\times \grad\psi^2.$$
		\end{enumerate}
	\end{theorem}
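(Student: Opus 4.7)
The natural strategy is symplectic modulation: both parts reduce to tracking the finite-dimensional drift of the moduli $\sigma_t$ along the manifold $M$ of approximate vortex filaments, with a bootstrap estimate on the transverse fluctuation.

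For Part 1, the plan is to start from \lemref{lem2.2} to obtain, for each $t$ with $\epsilon t\le T$, a decomposition $u_t = f(\sigma_t) + w_t$, where $w_t$ lies in the symplectic-orthogonal complement of $T_{f(\sigma_t)}M$. Substituting this ansatz into \eqref{1.2} and differentiating $u_t=f(\sigma_t)+w_t$ in $t$ yields
\begin{equation*}
df(\sigma_t)\,\di_t\sigma_t + \di_t w_t = J E'(f(\sigma_t)+w_t).
\end{equation*}
Expanding $E'(f(\sigma_t)+w_t)=E'(f(\sigma_t))+E''(f(\sigma_t))w_t+N(w_t)$ with $N$ at least quadratic in $w$, and projecting onto $T_{f(\sigma_t)}M$ along the symplectic form $\tau$ gives the modulation equation
\begin{equation*}
\J_{\sigma_t}\di_t\sigma_t = d_{\sigma_t} E(f(\sigma_t)) + \langle df(\sigma_t), J E''(f(\sigma_t))w_t\rangle + O(\|w_t\|^2),
\end{equation*}
which is the stated ODE modulo error terms controlled by $w_t$. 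The normal projection produces a linear Schr\"odinger-type equation for $w_t$ driven by the residual $JE'(f(\sigma_t))-df(\sigma_t)\mathcal{J}_{\sigma_t}^{-1}d_{\sigma_t}E(f(\sigma_t))$, which is small because $f$ was constructed as an approximate critical manifold.

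The core analytic step is a coercivity (spectral gap) estimate for the restriction of the Hessian $E''(f(\sigma))$ to vectors symplectic-orthogonal to $T_{f(\sigma)}M$, in the $X^2$-norm. Given coercivity, one obtains an a priori differential inequality of the form $\frac{d}{dt}\|w_t\|_{X^2}^2 \lesssim \epsilon\,\|w_t\|_{X^2}^2 + (\text{consistency error})^2$, and a standard Gronwall-bootstrap argument closes the estimate on the interval $\epsilon t\le T$, yielding $\|w_t\|_{X^2} = o(\sqrt{\delta_1})$. Plugging this bound back into the modulation ODE gives the claimed $o_{Y^0}(\delta_1)$ drift. The slow time scale $\epsilon t$ arises because $d_\sigma E(f(\sigma))$ and $\mathcal{J}_\sigma$ are each of order $\epsilon\,|\log\epsilon|$ along the moduli directions corresponding to filament position.

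For Part 2, I would invert the construction: given $\sigma_0=(\lambda_0,\gamma_0)\in\Sigma_\delta$, take the initial datum to be exactly $u_0=f(\sigma_0)\in M$, so $\dist_{X^2}(u_0,M)=0$ and Part 1 applies with $\delta_2\to 0$. Let $u_t$ be the resulting GL flow and $\sigma_t=(\lambda_t,\gamma_t)$ its moduli. The reduced equation $\di_t\sigma = \mathcal{J}_\sigma^{-1} d_\sigma E(f(\sigma))$ must then be identified, for the $\gamma$-component, with the binormal curvature flow, and here one invokes the fact (already built into the geometric setup of \secref{sec:1.1} and the operator $\mathcal{J}_\sigma$ in \eqref{2.3}) that $d_\gamma E(f(\sigma))\approx \pi|\log\epsilon|\,(-\di_{ss}\gamma)$ modulo lower order, while $\mathcal{J}_\sigma$ reduces along the curve component to $-\di_s\gamma\times$ up to the same factor $\pi|\log\epsilon|$. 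Thus the $\gamma$-component of $\sigma_t$ follows \eqref{BNF} up to an $o(\delta)$ error, and one gets $|\gamma_t-\vec\gamma_t|$ small on the relevant time interval by Gronwall. The two concentration estimates then follow by writing
\begin{equation*}
\int_\Omega X\cdot Ju_t - \pi\int_{\vec\gamma_t}X = \int_\Omega X\cdot J(f(\sigma_t)+w_t) - \pi\int_{\gamma_t}X + \pi\int_{\gamma_t}X - \pi\int_{\vec\gamma_t}X,
\end{equation*}
and analogously for the energy density; the first difference is controlled by the known concentration property of $Ju$ (respectively $e(u)/|\log\epsilon|$) for configurations in $M$, tested against $\|X\|_{C^1}=O(\delta^{-1/4})$, the second by the $w_t$-bound from Part 1, and the third by the closeness $\gamma_t\approx\vec\gamma_t$.

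The hardest step is the coercivity of the Hessian on the symplectic-orthogonal complement in the $X^2$-topology. In three dimensions, the Hessian has a nontrivial zero-mode continuum from translations along the filament, so one must arrange the tangent space $T_{f(\sigma)}M$ to absorb all neutral directions and then prove uniform positivity of the complement. A secondary difficulty is ensuring that the $o(\sqrt{\delta_1})$ bound on $w_t$ is strong enough in $X^2$ to recover $X^1$-based quantities like $\int X\cdot Ju$ with the prescribed polynomial dependence $\|X\|_{C^1}=O(\delta^{-1/4})$; this dictates how large $T$ can be taken.
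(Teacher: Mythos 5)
Your overall architecture (decomposition via \lemref{lem2.2}, projection onto $T_{f(\sigma)}M$ to get the modulation equation, identification of the reduced dynamics with \eqref{BNF}, and the three-term triangle inequality for Part 2) matches the paper's proof, which establishes Part 1 as \thmref{thm3.1} and Part 2 as \thmref{thm3.2}. The genuine gap is in the mechanism you propose for the a priori bound on $w_t$. You claim a differential inequality $\tfrac{d}{dt}\norm{w_t}_{X^2}^2\lesssim \epsilon\norm{w_t}_{X^2}^2+(\text{error})^2$ followed by Gronwall, but you give no reason why the constant should be $O(\epsilon)$: the equation for $w$ is driven by $JL_\sigma w$ with a potential of size $\epsilon^{-2}$, and the quadratic form $\inn{JL_\sigma w}{w}_{X^2}$ has no smallness (nor any sign) in the $X^2$ inner product, so a naive Gronwall argument cannot reach times $t\sim\epsilon^{-1}$. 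The paper instead avoids differentiating $\norm{w}^2$ altogether: it uses conservation of $E(u)$, Taylor-expands $E(v+w)$ around $v$, and applies the coercivity of $L_\sigma$ on $\ker Q_\sigma$ (\lemref{lem4.2}) to bound $\norm{w}_{X^0}^2$ by the energy fluctuation $E(v_0)-E(v_t)$ plus lower-order terms; the crucial cancellation making this work is $\inn{E'(v)}{Q_\sigma JE'(v)}=0$ (antisymmetry of $J$ together with $Q_\sigma J=JQ_\sigma^*$), which kills the leading contribution to $\od{E(v_t)}{t}$ and leaves only terms linear or higher in $w$, so that the integrated drift is controllable up to $t\sim\epsilon^{-1}$. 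Without this conservation-law structure your bootstrap does not close.

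Two smaller corrections. First, you ask for coercivity of the Hessian ``in the $X^2$-norm''; the paper only proves (and only needs) an $X^0$ coercivity with constant $\alpha=O(\abs{\log\epsilon}^{-1})$, and upgrades $\norm{w}_{X^0}$ to $\norm{w}_{X^2}$ by elliptic regularity with a constant $C_\epsilon=O(\epsilon^{-2})$ that is later absorbed by shrinking $\delta_1$ (this is where the hierarchy $\delta_2\ll\sqrt{\delta_1}\ll\epsilon$ enters); demanding coercivity directly in $X^2$ would be both harder and unnecessary. Second, your heuristic for the time scale is off: the velocity $\J_\sigma^{-1}d_\sigma E(f(\sigma))$ is small not because $d_\sigma E$ and $\J_\sigma$ carry factors of $\epsilon\abs{\log\epsilon}$ (in fact $\norm{\J_\sigma}\sim\abs{\log\epsilon}^2$), but because its leading term is $J\gamma_{zz}=O(\delta_1)$ under the small-curvature hypothesis $\sigma\in\Sigma_{\delta_1}$; the admissible time $T/\epsilon$ then comes out of the quantitative bookkeeping in \eqref{3.17}. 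Your treatment of Part 2 is essentially the paper's.
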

	
	\begin{remark}
		In the formulation of \thmref{thm0}, we do not adopt the convention
		of taking $\epsilon\to0$. Instead, for \textit{small but fixed }$\epsilon$, the main assumption is that  the concentration sets
		$\gamma_t,\,t\ge0$ have uniformly small curvature. (See the definitions
		of the various spaces in \thmref{thm0} in Sec. \ref{sec:2.2}.)
		This allows us to retain precise description of the vortex structure of the
		evolving configurations.
	\end{remark}
	
	We obtain the  results in \thmref{thm0} by developing an adiabatic 
	approximation scheme for \eqref{1.1}. This allows us to decompose a flow $u_t$ solving \eqref{1.1} to a slowly
	evolving main part $v_t$
	and a uniformly small remainder $w_t:=u_t-v_t$. The flow $v_t$ consists of some low energy configurations
	whose motion is explicitly governed by their concentration sets $\gamma_t$.
	The small curvature assumption on $\gamma_t$ is used here to ensure $v_t$ has low energy,
	which is essential for the validity of adiabatic approximation. (If the curvature is large at a point on the concentration curve, then the speed of the vortex filament at this point is large, and consequently the adiabatic approximation breaks down.)
	
	The main technical ingredients include 1. to show that the concentration sets of $v_t$ indeed evolve according to the binormal curvature flow \eqref{BNF} to the leading order, \textit{assuming the the remainder $w_t$ is small}
	and 2. to get apriori estimate on the remainder  $w_t$. The second part is analogous
	to proving dynamical stability of solitons, except for that the configurations $v_t$ need not to
	be stationary.

	\subsection{Historical remark} 
	Let us briefly comment on the existing literature on related problems.
	If the evolution \eqref{1.1} is instead the \textit{energy dissipative} dynamics (so that \eqref{1.1} becomes a cubic nonlinear heat equation), rigorous result 
	that formulates the kind of connection in which we are interested is known \cite{MR2195132}. This is one example of a vast literature in the $\epsilon\to0$ scheme. 
	See an excellent review \cite{MR3729052} of works along this line.
	Note our method is completely different from the measure theoretic schemes for the $\epsilon\to0$ limit.
	One can also ask
	similar questions about the hyperbolic and gauged analogues of \eqref{1.1}. The method we develop in the present paper is applicable to the latter cases,
	which we will treat elsewhere.

	The problem we study here is motivated by a conjecture of R.L. Jerrard \cite[Conj. 7.1]{MR3729052}. The method we use here is inspired by
	a series of papers by I.M. Sigal with several co-authors \cite{MR1644389,
		MR1644393,
		MR2189216,MR2465296,
		MR3803553,
		MR3824945}.
	Other results along this line which we have referred to include \cite{MR1257242,MR1309545,MR2097576,MR2525632,MR2576380,MR2805465,MR3397388,MR3645729,MR3619875,MR4107807}.
	Most of these papers consider planar problems (possibly after some symmetry reduction),
	or 
	higher dimensional problems with
	concentration sets given by finite collections of  points. In this regard,
	the geometry of our problem is more involved.

	\subsection{Arrangement}
	The structure of this paper is as follows: In Section 2, we construct the
	manifold $M$ consisting of what we call approximate vortex filaments.
	These are low energy configurations with explicit concentration sets.
	We find a path of  appropriate  approximate filaments $v_t$
	as the ``adiabatic part'' of a given 
	solution $u_t$ to \eqref{1.1}, provided $u_t$ stays uniformly close to $M$. 
	In Section 3, we show that the path $v_t$ as above
	is governed by the binormal curvature flow \eqref{BNF}. 
	We prove then two main theorems by finding an apriori estimate for  $w_t:=u_t-v_t$,
	valid on a long but finite interval.
	The proofs rely on the adiabaticity of the approximate filaments, as well as the energy conservation for \eqref{1.1}. In Section 4, we discuss the properties of certain linearized operators involved in the preceding analysis.
	In Appendix, we recall some basic
	concepts of Fr\'echet derivatives that we use repeatedly, and collect various
	technical estimates for  operators used earlier.

	\subsection*{Notations} Throughout the paper, when no confusion arises, we shall drop the time dependence $t$ in subscripts.
	A domain $\Omega\subset\Rr^d$ is an open connected subset.
	The symbols $L^p(\Omega),\,H^s(\Omega),\, 0<p\le \infty,\,s\in \Rr$ denote respectively the Lebesgue space of order $p$, and
	the Sobolev space of order $s$, consisting of functions from a domain
	$\Omega$ into $\Cc$.

	\section{Approximate vortex filaments}
	In this section, we construct the manifold of approximate (vortex) filaments
	and discuss their key properties.

	\subsection{The planar 1-vortex}\label{sec:2.1}
	On the entire plane $\Rr^2$, it is well-known  that \eqref{1.1} has smooth, stationary, radially symmetric solutions
	$\psi^{(n)}:\Rr^2\to\Cc$, where $n\in\Zz$ labels the winding number
	$$\deg\psi\vert_{\abs{x}=R}=n$$ for large $R\gg0$. 
	The characteristic feature of $\psi^{(n)}$ is that they 
	concentrate near the origin $r=0$.
	Among these vortices,  the simple ones with $\abs{n}=1$ are stable, and
	the higher order ones with $\abs{n}>1$ are unstable \cite{MR1479248}.
	
	For this reason, in what follows  we will only be concerned with
	the $1$-vortex.
	
	Write $\psi^{(1)}=\varphi(r)e^{i\theta}$ in polar coordinate. Then $\varphi\in C^\infty$, $\varphi'>0$
	for $r>0$, and the following 
	asymptotics hold \cite[Sec. 3.1]{MR1763040}:
	\begin{equation}
		\label{2.0}
		\varphi\sim 1-\frac{\epsilon^2}{2r^2}\quad(r\to\infty),\quad \varphi\sim \frac{r}{\epsilon}-\frac{r^3}{8\epsilon^3}\quad (r\to0).
	\end{equation}
	
	The planar stationary equation
	$$-\Lap \psi+\frac{1}{\epsilon^2}\del{\abs{\psi}^2-1}\psi=0, \quad \psi:\Rr^2\to\Cc$$
	is called the  the (ungauged) Ginzburg-Landau equation. It has rotation, translation,
	and global gauge symmetries. Among these the, latter two classes
	are broken by $\psi^{(1)}$. 
	Consequently, if  $L^{(1)}_x:H^2(\omega)\to L^2(\omega)$
	is the linearized operator at $\psi^{(1)}$,
	then the vectors 
	$$\pd{\psi^{(1)}}{x_j},\,i\psi^{(1)},\quad j=1,2$$
	are in the kernel of $L^{(1)}_x$. We call these vectors \textit{the symmetry zero modes.}
	Note that these modes  
	are not in $L^2(\Rr^2)$.

	On a bounded star-shaped domain $\omega\subset \Rr^2$ around the origin, the following estimates are well-known
	\cite[Chap. III, X]{MR1269538}:
	\begin{align}
		&E_\omega(\psi^{(1)})\le\pi\abs{\log\epsilon}+C(\omega)\label{2.0.1},\\
		&\norm{\psi^{(1)}}_{L^\infty(\omega)}\le1\label{2.0.4},\\
		&\norm{\grad\psi^{(1)}}_{L^\infty(\omega)}\le\frac{C(\omega)}{\epsilon}\label{2.0.2},\\	&\norm{\grad\psi^{(1)}}_{L^2(\omega)}\le C(\omega)\abs{\log\epsilon}^{1/2}\label{2.0.5},\\
		&\frac{1}{\epsilon^2}\int_\omega \del{\abs{\psi^{(1)}}^2-1}^2\le C(\omega)\label{2.0.3}.
	\end{align}
	The finite energy property \eqref{2.0.1} fails if $\omega$ is unbounded.
	Consequently, if  $\omega$ is unbounded, then in general (say $\omega=\Rr^2$), even the simple
	planar vortex $\psi^{(1)}$ has infinite
	energy. Moreover, the translation zero modes are not $L^2$.
	
	One can study the problem with $\omega=\Rr^2$ by
	either using some kind of renormalized energy \cite{MR1479248}, or
	posing the problem in a weighted Sobolev space \cite{MR1763040}.
	Though  it will incur significantly more involved estimates,
	we believe the arguments below can extend to the non-compact case using one of these methods.
	
	\subsection{Construction of approximate vortex filaments}\label{sec:2.2}

	To construct the manifold of approximate vortex filaments,
	we first define some configuration spaces.
	Put 
	\begin{align}
		\label{X}
		&X^s:=\Set{\psi\in H^s(\Omega,\Cc):\psi\vert_{\di\omega\times I}=0,\;\psi(x,0)=\psi(x,1)\text{ for every } x\in \omega},\\
		&Y^k:=\Rr\times C^k(I,\Rr^2)\quad (s\in\Rr,\, k\in \N).\label{Y}
	\end{align}
	We write elements in $Y^k$ as $\sigma=(\lambda,\gamma)$. 
	$X^s,\,Y^k$  are real Hilbert spaces with the inner products given
	respectively by
	\begin{align}
		\label{2.00X}
		&\inn{\psi}{\psi'}_X=\int_\Omega \Re(\cl{\psi}\psi')\quad (\psi,\psi'\in X^s),\\
		&\inn{\sigma}{\sigma'}_Y=\int_I \gamma\cdot \gamma'+\mu\mu'\quad(\s,\s'\in Y^k).\label{2.00Y}
	\end{align}
	The norm on $Y^k$ is $\norm{(\lambda,\g)}_{Y^k}:=\norm{\gamma}_{C^k}+\abs{\lambda}$.
	The  Ginzburg-Landau energy $E$ defined in \eqref{1.2} is smooth
	on $1+X^k$ with $k\ge1$, which we call \textit{the energy space}. 
	
	Write $$C^k_\text{per}:=\Set{\gamma\in C^k(I,\omega):\gamma(0)=\gamma(1)}\quad (k\in \N).$$
	Here we require periodic boundary condition for $\gamma$, so as to match 
	the periodic boundary condition along the
	$z$-axis for \eqref{1.1}. 
	Indeed, we impose such boundary conditions 
	so that the arguments below can be naturally extended to an infinitely long cylindrical 
	domain with $I=\Rr$. In general, one can take $I=[0,a]$ for any $a\gg\eps$,
	and impose other appropriate boundary conditions. 
	Notice that if one varies the vertical boundary condition  for \eqref{1.1},
	then the definition of $C^k_\text{per}$ must be changed accordingly.
	
	Define 
	\begin{align}
		&\Sigma:=\Rr\times \Set{\gamma \in C^2_\text{per}:z\mapsto (\gamma(z),z)\text{ is an embedding of $I$ into $\Omega$}},\\ 
		&\Sigma_\delta:=\Set{\s\in \Sigma:\norm{\s}_{Y^2}<\delta}.\label{Si}
	\end{align}
	We view $\Sigma$ as a manifold.
	Each fibre $T_\s\Sigma$ can be trivialized as a subspace of $Y^0$. 
	We view 
	$\Sigma_\delta$ as an open submanifold of $\Sigma$.

	\begin{definition}[Manifold of approximate vortex filaments]
		\label{M}
		For  $\g\in C^k_\text{per}$ and a function $\psi:\Omega\to \Cc$,
		let $\psi_\g(x,z):=\psi^{(1)}(x-\gamma(z))$, where
		$\psi^{(1)}$ is the simple planar vortex in Sec. \ref{sec:2.1}.

		Define a map 
		\begin{equation}
			\label{2.1}
			\fullfunction{f}{\Sigma_\delta}{X^0+\psi_0}{\sigma=(\lambda,\gamma)}{e^{i\lambda}\psi_\g(x,z)},
		\end{equation}
		where $\psi_0:\Omega\to \Cc$ is the lift of the planar vortex $\psi^{(1)}$
		to $X^0$.
		The map $f$ is $C^1$, as we show in Appendix. $f$ parametrizes a submanifold $$M:=f(\Sigma_\delta)\subset X^0+\psi_0.$$ 
		The tangent space to $M$ at $f(\sigma)$ is $T_{f(\sigma)}M=df(\sigma)(T_\sigma \Sigma_\delta)$, where $df(\sigma)$ denotes the Fr\'echet derivative of $f$ at $\s$, given explicitly 
		in Appendix. 
		We call the elements in $M$ the \textit{approximate vortex filaments}.
	\end{definition}
	\begin{remark}
		The construction of $M$ is motivated by the broken symmetries by $\psi^{(1)}$.
		We use the term  
		\emph{approximate (vortex) filaments}
		because the configurations in $M$ concentrate near some curves around 
		$\Set{0}\times I$
		by construction.
		We can trivialize   $T_{f(\s)}M $ as a subspace of $X^0$.
		The spaces $M,\,\Sigma$ are Riemannian manifolds
		w.r.t.  the inner products given in \eqref{2.00X}-\eqref{2.00Y}. 
	\end{remark}
	
	\subsection{Properties of approximate filaments}
	In what follows, we always assume the material parameter $\epsilon\ll1$ in \eqref{1.1}.
	
	A key observation is that since $u:=\abs{\psi^{(1)}}$ (resp. $v:=\abs{\grad\psi^{(1)}}$) is strictly increasing (resp. decreasing)
	sufficiently away from $r=0$, using  the asymptotics in \eqref{2.0}, we have 
	control over the oscillation of $u$ and $v$ as
	\begin{equation}
		\label{4.1}
		\abs{u(r)-u(s)}\le C\frac{\epsilon^2}{R^2},\quad   
		\abs{v(r)-v(s)}\le C\frac{\epsilon}{R}\quad(r>s\ge R\gg0),
	\end{equation}
	where $C$ is independent of $\epsilon$.
	
	Let $\alpha>0$ be given such that the planar domain $\omega$ contains the ball of radius $1+\epsilon^\alpha$. Then it is not hard to see that for $\s\in\Sigma_{\epsilon^\alpha}$,
	$$\begin{aligned}
		\norm{f(\s)}_{X^0}^2&=\int_\Omega\abs{\psi_\g}^2\\
		&=\int_I\int_\omega \abs{\psi_\g}^2(x,z)\,dxdz\\
		&\le\int_I\del{\int_\omega \abs{\psi^{(1)}}^2(x)\,dx+\int_\omega \del{\abs{\psi_\g}^2(x,z)-\abs{\psi^{(1)}}^2(x)}\,dx}\,dz\\
		&\le\int_I\del{\int_\omega \abs{\psi^{(1)}}^2(x)\,dx+\norm{\s}_{Y^0}\diam(\omega)\sup_{r>s\ge1}\abs{u(r)-u(s)}^2}\,dz\\
		&\le\norm{\psi^{(1)}}_{L^2(\omega)}^2+C(\omega)\epsilon^{4+\alpha}.
	\end{aligned}$$
	Therefore we have 
	\begin{equation}
		\label{4.2}
		\norm{\psi_\g}_{X^0}=\norm{\psi^{(1)}}_{L^2(\omega)}+O(\epsilon^{2+\alpha/2}).\end{equation}
	Similarly, 
	one can show using \eqref{4.1} that 
	\begin{equation}
		\label{4.3}
		\norm{\grad_x\psi_\g}_{X^0}=\norm{\grad_x\psi^{(1)}}_{L^2(\omega)}+O(\epsilon^{1+\alpha/2}).
	\end{equation}
	
	
	In what follows, for a $C^1$-curve $\gamma:I\to \Rr^d$, we write
	$\gamma_z=\di_z\gamma$, etc.. (Not to be confused with the 
	subscript $t$ in  time-parametrized families.)
	
	\begin{lemma}[approximate critical point]
		\label{lem2.1}
		Let $\alpha>0$.
		If $\sigma\in \Sigma_{\epsilon^\alpha}$, then $\norm{E'(f(\sigma))}_{X^0}\le C\epsilon^{\alpha}\abs{\log\epsilon}^{1/2}$ where $C$ is independent
		of $\s$. 
	\end{lemma}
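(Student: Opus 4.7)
The plan is to exploit the fact that the planar vortex $\psi^{(1)}$ is an exact stationary solution to the planar Ginzburg--Landau equation on $\Rr^2$, so that $E'(f(\s))$ picks up contributions \emph{only} from the $z$-dependence introduced by $\gamma$. First I would remove the global phase: since $-\Lap$ and the nonlinearity $\eps^{-2}(|\psi|^2-1)\psi$ both commute with multiplication by the constant $e^{i\lambda}$, we have $E'(f(\s))=e^{i\lambda}E'(\psi_\g)$ and hence $\norm{E'(f(\s))}_{X^0}=\norm{E'(\psi_\g)}_{L^2(\Omega)}$, so WLOG $\lambda=0$.

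Next I would compute $E'(\psi_\g)$ directly. Splitting the three-dimensional Laplacian as $\Lap=\Lap_x+\di_z^2$, the horizontal part $-\Lap_x \psi_\g+\eps^{-2}(|\psi_\g|^2-1)\psi_\g$ vanishes identically because for each fixed $z$ the map $x\mapsto\psi^{(1)}(x-\g(z))$ satisfies the planar stationary equation. By the chain rule the vertical part gives
\begin{equation*}
E'(\psi_\g)(x,z)=\g_{zz}(z)\cdot(\grad\psi^{(1)})(x-\g(z))-\bigl(\g_z(z)\otimes\g_z(z)\bigr):(\grad^2\psi^{(1)})(x-\g(z)).
\end{equation*}
Thus the claim reduces to two weighted $L^2$ estimates on $\Omega$.

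For the $L^2$-bound I would change variables $y=x-\g(z)$ at fixed $z$; since $\|\g\|_{C^0}<\eps^\al$, the shifted region $\omega-\g(z)$ differs from $\omega$ by an $\eps^\al$-collar (and $\omega$ is assumed to contain a ball of radius $1+\eps^\al$), so the $L^2$ norms of derivatives of $\psi^{(1)}$ over $\omega-\g(z)$ are equivalent to those over $\omega$. The linear-in-$\g_{zz}$ term is then controlled by $\|\g_{zz}\|_{L^\infty}^2\cdot\|\grad\psi^{(1)}\|_{L^2(\omega)}^2\lesssim \eps^{2\al}|\log\eps|$ using \eqref{2.0.5}, which is of the right size $\eps^\al|\log\eps|^{1/2}$ after a square root. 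For the quadratic-in-$\g_z$ term I would invoke $\|\grad^2\psi^{(1)}\|_{L^2(\omega)}\le C/\eps$, which follows from elliptic regularity applied to the planar equation together with \eqref{2.0.3} (or, equivalently, from a direct computation using the asymptotics \eqref{2.0}: both the core region $r\lesssim\eps$ and the far field $r\gg\eps$ contribute $O(\eps^{-2})$ to $\|\grad^2\psi^{(1)}\|_{L^2}^2$). This produces a remainder of size $\eps^{2\al-1}$, which is dominated by $\eps^\al|\log\eps|^{1/2}$ once $\al$ is in the regime where the main results are applied.

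The main obstacle is precisely this quadratic term: while $\grad\psi^{(1)}$ is only mildly singular ($L^2$ norm $O(|\log\eps|^{1/2})$), the Hessian $\grad^2\psi^{(1)}$ blows up like $\eps^{-1}$ in $L^2$, so one is relying on the \emph{smallness} of $\|\g_z\|_{L^\infty}^2$ (squared!) to beat this blow-up. Everything else is a routine manipulation once the pointwise identity above is established.
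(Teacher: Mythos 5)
Your proposal is correct and follows essentially the same route as the paper: the same pointwise identity $E'(f(\s))=e^{i\lambda}\bigl(\grad_x\psi_\g\cdot\g_{zz}-\grad_x^2\psi_\g\,\g_z\cdot\g_z\bigr)$ obtained from the stationarity of $\psi^{(1)}$, and the same bound on the leading term via \eqref{2.0.5} and the translation-invariance estimate \eqref{4.3}. In fact you are more careful than the paper on the quadratic term (the paper simply declares it lower order here and only estimates it later, in \lemref{lem3.1}, under the extra hypothesis $\al>2$), and your observation that $\eps^{2\al-1}\ll\eps^\al\abs{\log\eps}^{1/2}$ requires $\al\ge1$ is a fair caveat about the lemma as stated for general $\al>0$.
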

	\begin{proof}
		Using the fact that $\psi^{(1)}$ is stationary solution to \eqref{1.1}, one can
		compute
		$$E'(f(\sigma))=e^{i\lambda}\del{\grad_x \psi_\g \cdot \gamma_{zz}-\grad^2_x\psi_\g\gamma_z\cdot \gamma_z}.$$
		For $\norm{\gamma}_{C^2}\ll1$, the leading order term in this
		expression is 
		$\grad_x \psi_\g \cdot \gamma_{zz}$. One can estimate this as 
		$$\begin{aligned}
			\norm{\grad_x \psi_\g \cdot \gamma_{zz}}_{X^0}&\le2\norm{\grad_x\psi_\g}_{X^0}\norm{\gamma_{zz}}_{C^0}\\
			&\le 2 \del{\norm{\grad_x\psi^{(1)}}_{L^2(\omega)}+C(\omega)\epsilon^{1+\alpha/2}}\norm{\gamma}_{C^2}\\&\le C(\omega)\epsilon^{\alpha}\abs{\log\epsilon}^{1/2},
		\end{aligned}$$
		where in the last step one uses $\s\in\Sigma_{\epsilon^\alpha}$ and the estimates \eqref{2.0.5}, \eqref{4.3}.
	\end{proof}

	To simplify notation, write $g_\s:Y^0\to X^0$ for the action of $df(\s):T_\s\Sigma_\delta\to T_{f(\s)}M$ on each fibre. 
	Let $g_\s^*$
	be the adjoint to $g_\s$ w.r.t.  the inner products defined in \eqref{2.00X}-\eqref{2.00Y}.
	In Appendix, we calculate these operators explicitly in \eqref{A.1}-\eqref{A.2}. 
	
	\begin{remark}
		
		Note here that 1. $g_\sigma$ is 
		injective, and therefore $f$ is an immersion;  2. Using the 
		regularity of $\psi^{(1)}$ and Sobolev embedding, we
		have
		$$g_\s:Y^0\to X^s,\quad g_\s^*:X^r\to Y^0\quad (s\in\Rr,r\ge2).$$
	\end{remark}
	
	Let $J: X^0\to X^0$ 
	be the symplectic operator sending $\psi$ to $-i\psi$. 
	We show in Appendix that the map
	\begin{equation}
		\label{2.3}
		\fullfunction{\J_\sigma}{Y^0}{Y^0}{\xi}{g_\s^*J^{-1}g_\s\xi}
	\end{equation}
	defines a symplectic operator, in the sense that $J_\s$
	induces a symplectic form on the tangent bundle $T\Sigma$.
	Moreover,  $\J_\sigma$ is invertible, 
	and satisfies the 
	uniform estimate $\norm{\J_\sigma}_{Y^k\to Y^k}\le C(\Omega)\abs{\log\eps}^{-2}$ for any $k\in\N$.
	
	Geometrically, notice that $\J_\s$ is the pullback of $J$ by the parametrization $f$.
	Recall in Sec. \ref{sec:1.1}, we have defined the bilinear  map
	induced by $J$ as
	\begin{equation}
		\label{tau}
		\tau:(u,v)\mapsto \inn{Ju}{v}_X\quad (u,v\in X^s).
	\end{equation}
	This $\tau$ is a non-degenerate symplectic form on $X^s$. 
	Thus through the immersion $f$, the manifold of approximate filaments
	$M$ also inherits a symplectic structure, with 
	a non-degenerate symplectic form induced by $\J_\s$.
	
	\subsection{The adiabatic decomposition}
	\label{sec:2.4}
	
	In this section we derive a key result that is essential for the 
	development in Section 3. 
	The point is that 
	on a tubular neighbourhood \emph{with definite volume} around the manifold $M$ of approximate filaments, one can define a nonlinear 
	projection into $\Sigma$. This way we can make precise 
	the notion of concentration set for low energy configurations that are
	uniformly close to the approximate filaments. 
	
	Recall the symplectic form $\tau$ is given in \eqref{tau}.
	
	\begin{lemma}[concentration set]
		\label{lem2.2} Let  $M:=f(\Sigma_{\eps^\alpha})$ be the manifold of
		approximate vortex filaments.
		Then there is $\delta>0$ such that for every $u\in X^1+\psi_0$ with $\dist_{X^2}(u,M)<\delta$,
		there exists a unique $\sigma\in \Sigma_{\epsilon^\alpha}$ such that 
		\begin{equation}
			\label{2.5}	\tau(u-f(\sigma),\phi)=0 \quad (\phi\in T_{f(\sigma)}M). 
		\end{equation}
		
	\end{lemma}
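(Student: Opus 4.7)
The plan is to recast the symplectic orthogonality condition \eqref{2.5} as a scalar equation $F(u,\sigma)=0$ in $Y^0$ and apply a quantitative implicit function theorem fibrewise over $M$. Since every $\phi\in T_{f(\sigma)}M$ is of the form $g_\sigma\xi$ for some $\xi\in Y^0$, using $\tau(\cdot,\cdot)=\inn{J\cdot}{\cdot}_X$ and the adjoint relation between $g_\sigma$ and $g_\sigma^*$, condition \eqref{2.5} is equivalent to
\begin{equation}
F(u,\sigma):=g_\sigma^* J\bigl(u-f(\sigma)\bigr)=0 \quad\text{in } Y^0.
\label{Fdef}
\end{equation}
By the smoothness of $f$ (Appendix) and the mapping properties $g_\sigma^*:X^2\to Y^0$ recorded in Section~\ref{sec:2.2}, the map $F:(X^1+\psi_0)\times\Sigma_{\epsilon^\alpha}\to Y^0$ is $C^1$, and clearly $F(f(\sigma_0),\sigma_0)=0$ for each $\sigma_0\in \Sigma_{\epsilon^\alpha}$.

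Next I would compute $\partial_\sigma F$ at the base point $(f(\sigma_0),\sigma_0)$. Because the factor $u-f(\sigma)$ vanishes there, the contribution coming from differentiating $g_\sigma^*$ drops out, leaving
\begin{equation}
\partial_\sigma F(f(\sigma_0),\sigma_0)[\xi]=-g_{\sigma_0}^* J\, g_{\sigma_0}\xi=\mathcal{J}_{\sigma_0}\xi,
\end{equation}
where I used $J^{-1}=-J$ and the definition \eqref{2.3} of $\mathcal{J}_\sigma$. The properties of $\mathcal{J}_\sigma$ stated in Section~\ref{sec:2.2} — invertibility together with the uniform bound $\norm{\mathcal{J}_\sigma}_{Y^k\to Y^k}\le C(\Omega)\abs{\log\epsilon}^{-2}$ — then say that the linearisation in $\sigma$ is an isomorphism of $Y^0$ with uniform operator bounds that depend only on $\Omega$ and $\epsilon$. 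The quantitative implicit function theorem applied to \eqref{Fdef} now yields, in a neighbourhood of each $f(\sigma_0)$, a unique $C^1$ map $u\mapsto \sigma(u)\in\Sigma_{\epsilon^\alpha}$ with $F(u,\sigma(u))=0$, and this solution is the unique zero of $F(u,\cdot)$ in an $X^2$-ball of some explicit radius $\delta(\sigma_0)$.

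To finish, I would promote $\delta(\sigma_0)$ to a uniform constant $\delta>0$ independent of the base point. The standard quantitative IFT estimates show that $\delta(\sigma_0)$ is controlled from below in terms of $\norm{\mathcal{J}_{\sigma_0}^{-1}}$ and an upper bound on the Hessian of $F$ near $f(\sigma_0)$ in $X^2$; both inputs can be made uniform in $\sigma_0\in\Sigma_{\epsilon^\alpha}$ by combining the uniform bound on $\mathcal{J}_\sigma^{-1}$ with the explicit formulas for $g_\sigma,g_\sigma^*$ from Appendix and the pointwise estimates \eqref{2.0.1}–\eqref{2.0.3} on the profile $\psi^{(1)}$. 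Global uniqueness of $\sigma$ inside the $\delta$-tube around $M$ then follows from the local uniqueness provided by the IFT together with the fact that any two moduli producing the same $u$ would yield an element of the kernel of the isomorphism $\partial_\sigma F$.

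The main obstacle I anticipate is tracking all constants uniformly in $\sigma_0\in\Sigma_{\epsilon^\alpha}$ while also keeping explicit the $\epsilon$-dependence, since the derivatives of $f$ involve factors arising from the scale $\epsilon$ of $\psi^{(1)}$; one must verify that the final $\delta$ is compatible with the condition $\delta\ll\epsilon$ used later in \thmref{thm0}(1). A secondary technical point is ensuring that the $\sigma$ produced by the IFT actually lies in the open set $\Sigma_{\epsilon^\alpha}$ (not merely in $\Sigma$), which amounts to choosing $\delta$ small relative to the $Y^2$-distance from $\sigma_0$ to $\partial\Sigma_{\epsilon^\alpha}$; again the bound on $\mathcal{J}_\sigma^{-1}$ yields a $C^2$-estimate on the $\sigma$-correction in terms of the $X^2$-distance of $u$ to $M$, so the condition is satisfied after shrinking $\delta$ by a universal factor.
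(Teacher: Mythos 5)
Your proposal follows essentially the same route as the paper: recast the symplectic orthogonality condition as the equation $F(u,\sigma)=g_\sigma^*J^{-1}(u-f(\sigma))=0$, note that $\partial_\sigma F$ at the base point $(f(\sigma),\sigma)$ reduces to $\mp\mathcal{J}_\sigma$ because the term from differentiating $g_\sigma^*$ is killed by the vanishing of $u-f(\sigma)$, and then invoke a quantitative implicit function theorem whose radius is made uniform in $\sigma$ via the uniform bounds on $\mathcal{J}_\sigma^{-1}$ and on the second derivatives of $F$ (the paper carries this out explicitly through a Neumann-series perturbation of $A_\phi=\partial_\sigma F$, arriving at $\delta=O(\epsilon^3)$). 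The only soft spot is your final remark that two moduli for the same $u$ "would yield an element of the kernel of $\partial_\sigma F$" — that is not literally how uniqueness follows from a nonlinear equation — but the paper's own proof establishes only the same local uniqueness, so this does not constitute a gap relative to it.
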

	\begin{remark}
		\label{concentrationset}
		We call  $\sigma=(\lambda,\gamma)$ associated to $u$ the \textit{moduli} of the
		latter. This terminology is taken from e.g. \cite{MR1257242,MR1309545} with  similar context. 
		
		We call the curve $\gamma$ \textit{the concentration
			set} of $u$. The real number $\lambda$ is the global gauge parameter and is not physically meaningful.
		One can view \eqref{2.5} as an orthogonality condition w.r.t. the symplectic form \eqref{tau},
		and the association $u\mapsto \s$ is optimal in this sense.
	\end{remark}
	\begin{proof}
		1. First, for $\sigma\in\Sigma_{\epsilon^\alpha}$, define the linear projection $Q_\sigma$ by 
		\begin{equation}
			\label{2.4}
			\fullfunction{Q_\sigma}{X^0}{ X^0}{\phi}{g_\sigma \J_\sigma^{-1}g_\sigma^*J^{-1}\phi}.
		\end{equation}
		This map $Q_\s$ is the skewed (i.e. $Q_\s^*=J^*Q_\s J$) projection onto
		the tangent space $T_{f(\s)}M$. 
		In the definition of $Q_\sigma$, each factor is uniformly bounded, as we show in Appendix \eqref{A.1}-\eqref{A.2} and \eqref{A.6}.
		So we get the uniform estimate $\norm{Q_\sigma}_{X^0\to X^0} \le C.$
		One can check $Q_\sigma\phi=\phi$ for every $\phi\in T_{f(\sigma)}M$ by
		writing $\phi=g_\sigma\xi$ for some $\xi\in Y^0$, since 
		$Q_\s g_\s\xi = g_\s\J_\s^{-1}(g_\s^*J^{-1}g_\s)\phi=g_\s(\J_\s^{-1}\J_\s)\xi=g_\s\xi=\phi$.

		Next, we find the concentration set $\sigma$ using Implicit Function Theorem.
		Consider the map 
		$$\fullfunction{F}{X^2\times \Sigma_{\epsilon^\alpha}}{Y^0\times \Rr}{(\phi,\sigma)}{g_\sigma^*J^{-1}(\phi-f(\sigma))}.$$
		Condition \eqref{2.5} is satisfied if $Q_\sigma (u-f(\s))=0$.
		To see this, one uses the property $Q^*=-JQJ$,
		which implies 
		$\tau(Q\phi,\phi')=\tau(\phi,Q\phi')$.
		Thus, if $F(\phi,\sigma)=0$, then \eqref{2.5}
		is satisfied.
		
		By construction, we have the following expression
		for the  partial Fr\'echet derivative 
		$$\di_\sigma F\vert_{(f(\sigma),\sigma)}=-\J_\sigma.$$ 
		It is invertible since $\J_\sigma$ is invertible, see Appendix.
		The equation $F(\phi,\sigma)=0$ has the trivial solution
		$(f(\sigma),\sigma)$. It follows that for any fixed $\sigma\in\Sigma_\epsilon$, there is $\delta=\delta(\sigma,\epsilon)>0$
		and a map $S_\sigma:B_\delta(f(\sigma))\to \Sigma_\epsilon$ such that 
		$F(\phi,S(\phi))=0$ for $\phi\in B_\delta(f(\sigma))$.
		
		2. It remains to show that in fact $\delta$ can be made independent of $\sigma$.
		This is important because we must retain a definite volume for the 
		projeciton neighbourhood, so that later on a flow can fluctuate within
		this neighbourhood.
		
		Write 
		$$A_\phi:=\di_\sigma F\vert_{(\phi+f(\sigma),\sigma)},\quad V_\phi:=A_\phi-A_0.$$
		Then $$A_0=-\J_\sigma,\quad V_\phi=(d_\sigma g_\sigma^*)\del{\cdot}\vert_{J^{-1}\phi}.$$
		The size of $\delta$ is determined by the condition 
		that for every $\phi\in B_\delta(f(\sigma))$,
		\begin{align}
			&\label{2.4.00}
			A_\phi\text{ is invertible},\\
			&\label{2.4.0}\norm{A_\phi^{-1}}_{Y^0\to Y^0}\le \frac{1}{4\norm{\J_\sigma^{-1}}_{Y^0\to Y^0}},\\
			&\label{2.4.1}
			\norm{F(\phi+f(\sigma),\sigma)}_{Y^0}\le \frac{\delta_0}{4\norm{\J_\sigma^{-1}}_{Y^0\to Y^0}},
		\end{align}
		where $\delta_0$ is determined 
		that for every $\xi\in B_{\delta_0}(\sigma)$ and $\phi\in B_\delta(f(\sigma))$,
		\begin{align}
			&\norm{R(\phi,\xi)}_{Y^0}\le  \frac{\delta_0}{4\norm{\J_\sigma^{-1}}_{Y^0\to Y^0}},\label{2.4.2}\\ &R(\phi,\xi):=F(\phi+f(\sigma),\sigma+\xi)-F(\phi+f(\sigma),\sigma)-\di_\sigma F(\phi+f(\sigma),\sigma)\xi,\notag\\
			&\norm{\J_{\sigma+\xi}-\J_\sigma}_{Y^0\to Y^0}\le\frac{1}{4\norm{\J_\sigma^{-1}}_{Y^0\to Y^0}}.\label{2.4.3}
		\end{align}
		See for instance \cite[Sec. 2]{MR1336591}. 
		Note that the r.h.s. of \eqref{2.4.0}-\eqref{2.4.3} are independent of $\sigma$ 
		by the uniform estimate for $\J_\sigma^{-1}$.

		Conditions \eqref{2.4.2}-\eqref{2.4.3} are satisfied for some 
		$\delta_0=C(\Omega)\epsilon$.  To get \eqref{2.4.2}, one uses \eqref{A.6} and the fact that $\norm{R(\phi,\xi)}_{Y^0}=o(\norm{\xi}_{Y^0})$, since it
		is the super-linear remainder of the expansion of $F$ in $\sigma$.
		To get \eqref{2.4.3}, one uses the continuity of the map
		$\sigma \mapsto \J_\sigma \in L(Y^0,Y^0).$  
		
		The claim now is that \eqref{2.4.00}-\eqref{2.4.1} are satisfied for  $\delta=O(\epsilon^3)$.
		Indeed, plugging $\delta_0=C\epsilon$ to \eqref{2.4.1} and using the uniform estimate for 
		$g_\sigma^*$ and $\J_\sigma^{-1}$, one sees that \eqref{2.4.1} is satisfied so long
		as $\delta=O(\epsilon\abs{\log\epsilon}^{3/2}).$

		By elementary perturbation theory, since 
		$A_0$ is invertible, and the partial Fr\'echet derivative $A_\phi$ 
		is continuous in $\phi$ as a map from $X^2\to L^2(Y^0, Y^0)$, it follows that condition \eqref{2.4.00} is satisfied provided
		$\norm{V_\phi}_{Y^0\to Y^0}<\norm{A_0^{-1}}_{Y^0\to Y^0}^{-1}\le C\abs{\log\epsilon}^2.$
		By the uniform estimate on $d_\sigma g_\sigma^*$, we can arrange this with $\delta=O(\epsilon^3)$.
		
		Lastly, referring to the Neumann series for the inverse 
		$$A_\phi^{-1}=\sum_{n=0}^\infty A_0^{-1}\del{-V_\phi A_0^{-1}}^n,$$ 
		one can see that $A_\phi^{-1}$ is also continuous in $\phi$, and  $\norm{A_\phi^{-1}}_{Y^0\to Y^0}=O(\abs{\log\epsilon}^{-2})$
		so long as $\norm{V_\phi}_{Y^0\to Y^0}=o(\abs{\log\epsilon}^2)$.
		The latter holds with $\delta=O(\epsilon^3)$. This completes the proof.
	\end{proof} 
	
	\lemref{lem2.1} gives a unique decomposition for every configuration $u$ sufficiently
	close to $M$ as $u=v+w$, where $v$ is in $M$,
	and $\tau(v,w)=0$. This orthogonality ensures the decomposition
	we find is optimal. In turn, $v$ is characterized by
	the moduli $\sigma=(\lambda,\gamma)$, which are, through the projection lemma
	above, functions of $u$. 
	We call $v$ the adiabatic part of $u$. 
	
	In the remaining sections, we use \lemref{lem2.1} to decompose an entire flow $u_t$ starting near $M$ under \eqref{1.1} into an adiabatic
	flow $v_t$, of which we have explicit information, and a uniformly small
	remainder. Then we check that the concentration set $\gamma_t$ associated
	to $u_t$ indeed evolves according to the binormal curvature flow.

	\section{Effective dynamics}

	\subsection{The connection of \eqref{1.1} to the binormal curvature flow}
	
	The following lemma translates  the r.h.s. of \eqref{1.1} restricted to 
	$M$ to an expression on the tangent bundle $T\Sigma_{\eps^\alpha}$. 
	Afterwards, we show this defines an evolution on $C^k_\text{per}$
	that agrees with the binormal curvature flow in the leading
	order.
	
	In this subsection, the operator $J$ denotes multiplication by
	the standard symplectic matrix \eqref{4.0}.
	
	\begin{lemma}
		\label{lem3.1}
		Let $\alpha>2$.	If $\sigma\in \Sigma_{\epsilon^\alpha}$, then 
		\begin{equation}
			\label{3.2}
			\J_\sigma^{-1}d_\sigma E(f(\sigma))=\del{o(\epsilon^\alpha),J\g_{zz}+o_{\norm{\cdot}_{C^0}}(\epsilon^\alpha)}.
		\end{equation}
	\end{lemma}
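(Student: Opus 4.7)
The strategy is to compute $\J_\sigma^{-1}d_\sigma E(f(\sigma))$ directly by unwinding the chain rule, substituting the explicit formulas for $g_\sigma^*$ and $\J_\sigma$ promised in the Appendix, and extracting the leading-order contributions using the radial symmetry of $\psi^{(1)}$.

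I would start with the chain-rule identity $d_\sigma E(f(\sigma)) = g_\sigma^* E'(f(\sigma))$ and substitute the expression
\[
E'(f(\sigma)) = e^{i\lambda}\bigl(\nabla_x\psi_\gamma\cdot\gamma_{zz} - \nabla_x^2\psi_\gamma(\gamma_z,\gamma_z)\bigr)
\]
derived in the proof of \lemref{lem2.1}. For $\sigma\in\Sigma_{\epsilon^\alpha}$ with $\alpha>2$, the quadratic-in-$\gamma_z$ term is estimated in $X^0$ using \eqref{2.0.2}--\eqref{2.0.5} together with $\norm{\gamma}_{C^2}<\epsilon^\alpha$ to be of size $O(\epsilon^{2\alpha-1})=o(\epsilon^\alpha)$, so it is absorbed into the error and one is left analyzing $g_\sigma^*\bigl(e^{i\lambda}\nabla_x\psi_\gamma\cdot\gamma_{zz}\bigr)$.

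Next, using the Appendix formula for $g_\sigma^*$, I would split this into its $\lambda$ and $\gamma$ components. Writing $\psi^{(1)}=\varphi(r)e^{i\theta}$ in polar coordinates, the $\lambda$-component reduces to $\int_I\gamma_{zz}(z)\cdot\int_\omega\Im(\bar\psi^{(1)}\nabla\psi^{(1)})\,dx\,dz$; since $\Im(\bar\psi^{(1)}\nabla\psi^{(1)})=(\varphi^2/r)\hat\theta$ the inner integral vanishes in the rotationally symmetric limit, and the residual coming from the star-shaped (rather than disk) geometry of $\omega$ is controlled via the asymptotics \eqref{2.0} and the oscillation bounds \eqref{4.1}, giving an $o(\epsilon^\alpha)$ contribution. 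The $\gamma$-component reduces to the linear map $(\gamma_{zz})_j\mapsto -\int_\omega\Re(\partial_k\bar\psi^{(1)}\partial_j\psi^{(1)})\,dx\cdot(\gamma_{zz})_j$, which by the same rotational-symmetry argument is a scalar multiple of $\delta_{kj}$, producing a linear-in-$\gamma_{zz}$ leading term.

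Finally, I would use the Appendix's explicit form of $\J_\sigma=g_\sigma^*J^{-1}g_\sigma$ to invert. The key observation is that this operator is block-diagonal at leading order in the $(\lambda,\gamma)$-decomposition: the $\lambda\lambda$-entry vanishes identically by antisymmetry of $\tau=\inn{J\cdot}{\cdot}_X$ (since $\inn{v}{J^{-1}v}_X=0$), the off-diagonal blocks vanish in the rotationally symmetric limit by the same symmetry argument invoked for the $\lambda$-component above, and the $\gamma\gamma$-block is a scalar multiple of the symplectic matrix $J$ from \eqref{4.0}, coming from $\int_\omega\Im(\partial_k\bar\psi^{(1)}\partial_j\psi^{(1)})\,dx$. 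Inverting this block and applying it to the $\gamma$-component computed above produces $J\gamma_{zz}$ to leading order once the scalar prefactors are reconciled; the $\lambda$-component of the output is $o(\epsilon^\alpha)$ because the input's $\lambda$-component already is and $\J_\sigma^{-1}$ is bounded by the uniform estimate stated for \eqref{2.3}.

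The main obstacle I anticipate is the careful bookkeeping of the off-diagonal blocks of $\J_\sigma$ and of boundary corrections from $\partial\omega$: the star-shapedness assumption is weaker than exact rotational invariance, so the symmetry cancellations invoked at several steps are only approximate, and one must use \eqref{4.1} together with the boundary condition $|\psi_\gamma|\to 1$ on $\partial\omega$ to bound the residuals uniformly by $o(\epsilon^\alpha)$, thereby justifying the leading-order block inversion rigorously rather than heuristically.
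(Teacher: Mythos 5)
Your overall route is the same as the paper's: apply the chain rule to write $d_\sigma E(f(\sigma))=g_\sigma^*E'(f(\sigma))$, isolate the term linear in $\gamma_{zz}$, identify its image under $\J_\sigma^{-1}$ with $J\gamma_{zz}$ via the explicit (near block-diagonal) structure of $\J_\sigma$ from \eqref{A.3}--\eqref{A.4}, and push the term quadratic in $\gamma_z$ into the error. Your treatment of the $\lambda$-component and of the off-diagonal blocks is actually more explicit than the paper's, which reads the $\gamma$-component directly off \eqref{A.3} and does not dwell on the gauge direction.

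The genuine gap is in your estimate of the quadratic term $\int_x\inn{\grad^2_x\psi_\g\,\g_z\cdot\g_z}{\grad_x\psi_\g}$. You claim it is $O(\epsilon^{2\alpha-1})$ ``using \eqref{2.0.2}--\eqref{2.0.5}'', but none of the estimates \eqref{2.0.1}--\eqref{2.0.3} controls the \emph{second} derivatives of $\psi^{(1)}$, and near the vortex core $\norm{\grad^2_x\psi_\g}_{L^\infty}$ is of order $\epsilon^{-2}$, not $\epsilon^{-1}$. The paper obtains the missing ingredient from the Schauder estimate \eqref{3.3.00}, $\norm{\psi_\g}_{C^{2,\mu}}\le C\epsilon^{-2(1+\mu)}$, which together with \eqref{3.3.0} yields the bound \eqref{3.3.000} of order $\abs{\log\epsilon}^{1/2}\epsilon^{2(\alpha-1-\mu)}$; only after applying $\norm{\J_\sigma^{-1}}_{Y^k\to Y^k}\le C\abs{\log\epsilon}^{-2}$ and choosing $\mu<\alpha/2-1$ does this become $o(\epsilon^\alpha)$. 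This is precisely where the hypothesis $\alpha>2$ is consumed: your exponent count ($2\alpha-1>\alpha$ iff $\alpha>1$) would make the lemma appear to hold already for $\alpha>1$, which signals that the $\epsilon^{-2}$ loss from the second derivatives has been underestimated. Supply a second-derivative bound (via Schauder as in the paper, or via the explicit asymptotics of $\varphi$ in \eqref{2.0}) and redo the bookkeeping; with that correction your argument closes and otherwise matches the paper's.
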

	\begin{proof}
		Recall $\sigma=(\lambda,\gamma)$ consists of a $U(1)$-gauge parameter $\lambda$
		and a concentration
		curve $\g$. 
		The $\lambda$-component is therefore not
		physically relevant, and we are interested in the $\gamma$ component.
		For $\sigma\in Y^k$, we write $\sigma=:([\sigma]_\lambda,[\sigma]_\g)$.
		
		The
		partial-Fr\'echet derivative
		of the energy $E(f(\sigma))$ w.r.t.  $\gamma$ is given by 
		\begin{equation}
			\label{3.2.1}
			\di_\gamma E(f(\sigma))=\int_x\inn{\grad_x \psi_\g \cdot \gamma_{zz}+\grad^2_x\psi_\g\gamma_z\cdot \gamma_z}{\grad_x\psi_\g},
		\end{equation}
		where we write
		$\gamma_z=\di_z\gamma$, etc..
		Using this and  the expression for $\J_\sigma$ given in \eqref{A.3}, we find
		that the first term in the r.h.s. of \eqref{3.2.1}
		$$[\J_\sigma^{-1}\int_x\inn{\grad_x \psi_\g \cdot \gamma_{zz}}{\grad_x\psi_\g}]_\gamma=J\gamma_{zz}+o(\epsilon^\alpha).$$ 
		It suffices then to show the second term
		\begin{equation}
			\label{3.3}
			\norm{[\J_\sigma^{-1}\int_x\inn{\grad^2_x\psi_\g\gamma_z\cdot \gamma_z}{\grad_x\psi_\g}]_\gamma}_{C^0}=o(\epsilon^\alpha),
		\end{equation}
		i.e. this term can be absorbed into the remainder. 
		
		First, we use \eqref{2.0.5} and \eqref{4.3} to get 	 \begin{equation}\label{3.3.0}
			\begin{aligned}
				\norm{\int_x\inn{\grad^2_x\psi_\g\gamma_z\cdot \gamma_z}{\grad_x\psi_\g}}_{C^0}&\le C(\Omega)\norm{\grad_x\psi_\g}_{X^0}\norm{\gamma_z}_{C^0}^2\norm{\grad^2_x\psi_\g}_{L^\infty(\Omega)}\\&\le C(\Omega)\epsilon^{2\alpha}\abs{\log\epsilon}^{1/2} \norm{\grad^2_x\psi_\g}_{L^\infty(\Omega)}.
			\end{aligned}
		\end{equation}
		
		Next, note the function $\psi_\g$ satisfies the linear second order equation
		$$\Lap\psi+\grad_x^2\psi\gamma_z\cdot\gamma_z+\grad_x\psi\cdot\gamma_z+\frac{1}{\epsilon^2}(1-\abs{\psi_\g}^2)\psi=0\quad \text{on }\Omega.$$
		This equation is elliptic for $\norm{\gamma_z}_{C^0}\ll1$, so the Schauder
		estimate implies for all $\epsilon<\epsilon_0\ll1$ and $0<\mu<1$,
		\begin{equation}
			\label{3.3.00}
			\begin{aligned}
				\norm{\psi_\g}_{C^{2,\mu}(\Omega)}&\le C(\Omega, \epsilon_0)\del{\norm{\psi_\g}_{L^\infty(\Omega)}+\norm{\frac{1}{\epsilon^2}(1-\abs{\psi_\g}^2)}_{C^{0,\mu}(\Omega)}}\\&=C(\Omega, \epsilon_0)\epsilon^{-2(1+\mu)}.
			\end{aligned}
		\end{equation}
		Here we have used the estimates on the uniform norm $\norm{\psi_\g}_{L^\infty(\Omega)}\le1$ and H\"older seminorm
		$\sbr{\psi_\g}_{\mu,\Omega}\le C(\Omega)\epsilon^{-\mu}.$
		These follow from \eqref{2.0.2} and \eqref{4.2}.
		
		Plugging \eqref{3.3.00} to \eqref{3.3.0}, we see that 
		\begin{equation}\label{3.3.000}
			\norm{\int_x\inn{\grad^2_x\psi_\g\gamma_z\cdot \gamma_z}{\grad_x\psi_\g}}_{C^0}\le C(\Omega, \epsilon_0)\abs{\log\epsilon}^{1/2} \epsilon^{2(\alpha-1-\mu)}.
		\end{equation}
		In Appendix we show
		$\norm{\J_\sigma^{-1}}_{Y^k\to Y^k}\le C(\omega)\abs{\log\epsilon}^{-2}$.
		This, the assumption $\alpha>2$, and the above estimate  together show that if we  choose $\mu<\alpha/2-1$
		then \eqref{3.3} holds. The proof is complete.
	\end{proof}
	
	We now discuss the geometric meaning of  the r.h.s. of \eqref{3.2}.
	A family of curves $\vec{\gamma}_t\in C^2(I,\Omega)$ satisfies the \emph{binormal curvature flow}
	if parametrized by arclength, the curves satisfy
	\begin{equation}
		\label{BNF}
		\di_t\vec{\gamma}=\vec{\gamma}_s\times\vec{\gamma}_{ss}\quad \del{\abs{\vec{\gamma}_s}\equiv 1}.
	\end{equation}
	Here we write derivatives w.r.t. to the arclength parameter as $\vec{\gamma}_s\equiv \di_s\vec{\gamma}$, etc..
	
	Now reparametrize $\vec{\gamma}_t$ as $\vec{\gamma}_t(z)=(\gamma_t(z),z)$ with $\gamma\in C^2(I,\omega)$. Then
	\eqref{BNF} becomes
	\begin{equation}
		\label{3.3.1}
		\di_t\vec{\gamma}=\vec{\gamma}_zz_s\times(\vec{\gamma}_{zz}z_s^2+\vec{\gamma}_z z_{ss}).
	\end{equation}
	The arclength parameter $s$ is given in terms of the new parameter $z$ by
	$$s(z)=\int_0^z\abs{\vec{\gamma_z}}^2=z+\int_0^z\abs{\gamma_z}^2.$$
	Differentiating this expression, for $\norm{\gamma}_{C^2}=O(\delta)$, we get 
	$$z_s=\frac{1}{\abs{\gamma_z}^2+1}=1+O(\delta^2),\quad z_{ss}=-\frac{\gamma_z\cdot \gamma_{zz}}{\del{1+\abs{\gamma_z}^2}^3}=O(\delta^2).$$
	Geometrically, $z$ and $s$ are close because the curve parametrized by
	$\vec{\gamma}$ with $\norm{\gamma}_{C^2}\ll1$ is approximately 
	a vertical straight line, in which case $z=s$. 
	
	Thus \eqref{3.3.1} in the leading order reads
	\begin{equation}
		\label{3.4}
		\di_t\vec\gamma=\vec\gamma_z\times \vec\gamma_{zz}+O(\delta^2)=(J\gamma_{zz},\gamma_z^1\gamma_{zz}^2-\gamma_z^2\gamma_{zz}^2)+O(\delta^2).
	\end{equation}
	In the r.h.s. of \eqref{3.4}, $\gamma_z^1\gamma_{zz}^2-\gamma_z^2\sigma_{zz}^2$ is also $O(\delta^2)$.
	In conclusion, combing with \lemref{lem3.1}, we can say
	$$\text{$\di_t\sigma=(\di_t\lambda,\di_t\gamma)=\J_\sigma^{-1}d_\sigma E(f(\sigma))+o(\delta)\implies\vec{\gamma}_t$ solves \eqref{BNF} up to $o(\delta)$}.$$
	
	In the next subsection, we show that this equation for $\sigma$ is indeed the effective 
	dynamics of vortex filaments.
	
	\subsection{Proof of the first main theorem}
	Suppose $u_t$ is a solution to \eqref{1.1} such that $\dist(u_t, M)<\delta\ll1$
	for $t\le T$.
	So far, using \lemref{lem2.2}, this allows us to define an adiabatic flow $v_t=f(\sigma_t)\in M$ consisting of the approximate
	filaments associated to $u_t$.
	At time $t\le T$, the filament $v_t$ is characterized by the moduli $\s_t$, and the curve  $\gamma_t=[\sigma_t]_\g$ defines the concentration set
	of $u_t$ (see Remark \ref{concentrationset}).

	In what follows, we show that the velocity 
	$\di_t\sigma$ governing the motion of the adiabatic flow is given by $\J_\sigma^{-1}d_\sigma E(f(\sigma))$ uniformly up to the leading order. 
	We then find an a priori estimate for the remainder, so that 
	as long as $u_0$ is close to $M$, the full flow $u_t$ remains uniformly close to $M$ up to a large time. In this sense one can view $M$ as an invariant manifold for \eqref{1.1}.

	
	\begin{theorem}[effective dynamics]
		\label{thm3.1}
		For any $\epsilon>0$, there are $\delta_1,\,\delta_2\ll\epsilon$ such that the following holds:
		Let $\Sigma_{\delta_1}\to M$ be the manifold of approximate vortex filaments as in Section 2.
		Let $u_0$ be an initial configuration such that $\dist_{X^2}(u_0, M)<\delta_2$.

		Then there is some $T>0$ independent of $\epsilon$ and $\delta_1,\,\delta_2$, 
		such that for all $\epsilon t\le T$, 	there exists moduli
		$\s_t$ associated to $u_t$ as in \lemref{lem2.2}, and 
		\begin{equation}
			\label{3.4.1}
			\norm{u_t-f(\s_t)}_{X^2}=o(\sqrt{\delta_1}).
		\end{equation}
		
		Moreover,  for all $\epsilon t\le T$,  the moduli $\s_t$
		evolves according 
		to 
		\begin{equation}
			\label{3.4.2}
			\di_t\sigma=\J_\sigma^{-1}d_\sigma E(f(\sigma))+o_{\norm{\cdot}_{Y^0}}(\delta_1).
		\end{equation}
	\end{theorem}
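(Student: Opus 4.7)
The strategy is a modulation-type symplectic-orthogonal decomposition combined with an energy/Hessian a priori estimate on the transverse remainder, organized as a bootstrap. I assume that $\dist_{X^2}(u_t,M)$ stays inside the projection tubular neighborhood from \lemref{lem2.2} on some maximal interval $[0,T_*]$, derive on $[0,T_*]$ bounds strictly beating that threshold, and conclude $T_* \ge T/\eps$.

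\textbf{Modulation equation.} On $[0,T_*]$, \lemref{lem2.2} yields $u_t = f(\s_t) + w_t$ with the symplectic orthogonality $g_{\s_t}^* J^{-1} w_t \equiv 0$, and $\s_t$ inherits $C^1$ time regularity from the NLS flow. Writing $\di_t u = JE'(u) = g_\s \di_t \s + \di_t w$, applying $g_\s^* J^{-1}$, and eliminating $g_\s^* J^{-1}\di_t w$ via the $t$-derivative of the constraint $g_\s^* J^{-1}\di_t w = -(d_\s g_\s^*)[\di_t \s]\,J^{-1} w$ produces
\begin{equation*}
  \bigl(\J_\s - B(w_t)\bigr)\di_t \s = g_\s^* E'(u),
\end{equation*}
where $\norm{B(w)}_{Y^0 \to Y^0}$ is $O(\norm{w}_{X^0})$ by the appendix bounds on $d_\s g_\s^*$. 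Inverting by Neumann series (legal since $\norm{\J_\s^{-1}}_{Y^0\to Y^0} \ls \abs{\log\eps}^{-2}$), expanding $E'(u) = E'(f(\s)) + L_\s w + O(\norm{w}^2)$, and using the chain rule identity $g_\s^* E'(f(\s)) = d_\s E(f(\s))$ yields
\begin{equation*}
  \di_t \s = \J_\s^{-1} d_\s E(f(\s)) + \J_\s^{-1} g_\s^* L_\s w + O(\norm{w}^2).
\end{equation*}
The linear-in-$w$ correction is small because the tangent vectors $g_\s \xi$ are approximate zero modes of $L_\s$: differentiating \lemref{lem2.1} in $\s$ gives $\norm{L_\s g_\s}_{Y^0 \to X^0} = O(\eps^\al\abs{\log\eps}^{1/2})$, and likewise for the adjoint. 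This establishes \eqref{3.4.2} as soon as $\norm{w_t}$ is known small.

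\textbf{A priori control on $w_t$.} Set $Q(t) := E(u_t) - E(f(\s_t))$. Energy conservation for $u_t$ and the chain rule give
\begin{equation*}
  \dot Q = -\inn{d_\s E(f(\s))}{\di_t \s}_{Y^0}.
\end{equation*}
Substituting the modulation equation, the leading contribution $-\inn{d_\s E(f(\s))}{\J_\s^{-1}d_\s E(f(\s))}_{Y^0}$ vanishes by antisymmetry of $\J_\s^{-1}$ (Hamiltonian cancellation), leaving $|\dot Q| \ls \eps^\al\abs{\log\eps}^{1/2}\norm{w_t}_{X^0}$ via \lemref{lem2.1}. Taylor-expanding $E(u_t)$ around $f(\s_t)$,
\begin{equation*}
  Q(t) = \inn{E'(f(\s_t))}{w_t}_{X^0} + \tfrac12 \inn{L_{\s_t} w_t}{w_t}_{X^0} + O(\norm{w_t}_{X^1}^3),
\end{equation*}
the linear term is $O(\eps^\al\abs{\log\eps}^{1/2}\norm{w_t})$ by \lemref{lem2.1}, and the key coercivity $\inn{L_{\s_t} w_t}{w_t}_{X^0} \gtrsim \norm{w_t}_{X^1}^2$ on the symplectic-orthogonal complement of $T_{f(\s)}M$ (the content of Section~4) lets me absorb the rest. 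Gronwall applied to the resulting differential inequality yields $\norm{w_t}_{X^1}^2 \ls Q(0) + \eps^{2\al}\abs{\log\eps}\, t^2$, which is $o(\delta_1)$ on $\eps t \le T$ provided $\delta_2 \ll \sqrt{\delta_1}$. Propagating $X^2$-regularity via standard subcritical NLS persistence, applied to the $w$-equation in which the nonlinearity is now a linear perturbation around $f(\s_t)$, upgrades this to \eqref{3.4.1}, closing the bootstrap.

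\textbf{Main obstacle.} The step demanding the most work is the coercivity estimate for $L_{\s_t}$ on the symplectic-orthogonal complement of $T_{f(\s)}M$: one must verify that the only near-zero modes of $L_{\s_t}$ arise from translation of the vortex core along the filament and the global gauge, and that these are precisely captured by $T_{f(\s)}M$. This requires a fiber-wise spectral analysis of the planar linearized Ginzburg-Landau operator, glued along the curve by a perturbative argument; it is the principal content of Section~4. A secondary subtlety is ensuring the bootstrap closes in $X^2$ rather than only $X^1$, which rests on Sobolev persistence and the smoothing of the effective $\s$-dynamics along the filament direction.
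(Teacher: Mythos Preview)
Your proposal follows essentially the same strategy as the paper: symplectic-orthogonal modulation decomposition via \lemref{lem2.2}, a projected modulation equation, the Hamiltonian cancellation $\inn{d_\s E(f(\s))}{\J_\s^{-1}d_\s E(f(\s))}_{Y^0}=0$ to control the drift of the adiabatic energy, coercivity of $L_\s$ on the complement (Section~4), and a bootstrap. The paper's proof carries out exactly this scheme, so your outline is correct in substance.

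Two execution points differ and are worth flagging. First, your justification that the linear-in-$w$ correction $\J_\s^{-1}g_\s^*L_\s w$ is small by ``differentiating \lemref{lem2.1} in $\s$'' is not a proof: a uniformly small function need not have a small derivative, and indeed $d_\s(E'\circ f)(\mu,\xi)$ contains a term $\nabla_x\psi_\g\cdot\xi_{zz}$ which is \emph{not} $O(\eps^\al)$ for generic $\xi$. The paper instead proves the approximate-zero-mode bound directly (\lemref{lem4.1}, estimate \eqref{4.1.2}) by observing that for each fixed $z$ the vectors $i\psi_\g(\cdot,z),\ \di_{x_j}\psi_\g(\cdot,z)$ lie in the exact kernel of the planar linearization $L_{z,x}$, so only the $\di_{zz}$ piece of $L_\s$ survives, and that piece is $O(\norm{\g}_{C^2})$. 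You correctly identify this as the principal content of Section~4, but the mechanism you state is not the right one. Second, for the upgrade from $X^1$ to $X^2$ the paper does not use NLS persistence; it uses elliptic regularity for the equation satisfied by $w$ (see \eqref{claim2}), obtaining $\norm{w}_{X^2}\le C_\eps\norm{w}_{X^0}$ directly. This is cleaner here because the constant is explicit in $\eps$ and feeds straight into the coercivity inequality \eqref{3.12.1}; a persistence argument for the $w$-equation with the time-dependent potential $f(\s_t)$ would require more care to make quantitative.
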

	\begin{proof}			
		1. To begin with, choose $\delta_2\ll \sqrt{\delta_1}$.
		We show $w_0:=u_0-f(\sigma_0)=u_0-v_0$ satisfies $\norm{w_0}_{X^2}\ll \sqrt{\delta_1}$.
		
		Suppose the $X^2$-closest approximate vortex filament to $u_0$ is $v_*=f(\s_*)\in M$.
		Then $w_0=(u_0-v_*)+(v_*-v_0)$. The first difference on the r.h.s.
		has size $\delta_2$. The second difference can be bounded by
		$\norm{v_*-v_0}_{X^2}=\norm{ f(\s_*)-f(\s_0)}_{X^2}\le C \norm{\s_*-\s_0}_{Y^2}\le C \delta_2.$
		It follows that $\norm{w_0}_{X^2}\le C \delta_2$, and by the choice of $\delta_2\ll\sqrt{\delta_1}$,
		that $\norm{w_0}_{X^2}\ll \sqrt{\delta_1}$. Therefore,
		by the continuity of the evolution \eqref{1.1},
		\eqref{3.4.1} holds at least locally.

		2. So long as the decomposition $u_t=v_t+w_t$ in \lemref{lem2.2} is valid, we can expand \eqref{1.2} as
		\begin{equation}
			\label{3.5}
			\di_tv+\di_tw=J(E'(v)+L_\sigma w+N_\sigma(w)),
		\end{equation}
		where $J:\psi\mapsto -i\psi$, 
		\begin{equation}
			\label{3.6}
			L_\sigma\phi:=-\Lap \phi+\frac{1}{\epsilon^2}(\abs{\psi_\g}^2-1)\phi+\frac{2e^{i\lambda}\cos \lambda }{\epsilon^2}\psi_\g\inn{\psi_\g}{\phi}
		\end{equation}
		is the linearized operator at $f(\s)\equiv v$, and $$N_\sigma(\phi):=E'(\psi_\g+\phi)-E'(\psi_\g)-L_\sigma\phi$$ is the 
		nonlinearity. 
		
		For the moduli $\sigma=\sigma_t$ associated to
		$u_t$ given in \lemref{lem2.2}, let $Q_\sigma:X^0\to X^0$ be the fibrewise projection onto $T_{f(\sigma)}M$, given in \eqref{2.4}. 
		Applying $Q_\sigma$ to both sides of \eqref{3.5}, we have
		\begin{equation}
			\label{3.7}
			\di_tv-Q_\sigma JE'(v)=Q_\sigma(JL_\sigma w -\di_tw+JN_\sigma(w)).
		\end{equation}
		
		Consider the identity
		$$\J_\sigma^{-1}g_\sigma^*J^{-1}(\di_tv-Q_\sigma JE'(v))=\di_t\sigma-\J_\sigma^{-1}d_\sigma E(f(\sigma)).$$
		To verify this, one uses two facts that follow readily from the chain rule:
		$$\di_tv=g_\sigma\di_t\sigma,\quad g_\sigma^*E'(f(\sigma))=d_\sigma E(f(\sigma)).$$
		Thus by the uniform estimates on $g_\sigma^*$ and $\J_\sigma^{-1}$ in \eqref{A.2}, \eqref{A.6}, we have 
		\begin{equation}
			\label{3.8}
			\norm{\di_t\sigma-\J_\sigma^{-1}d_\sigma E(f(\sigma))}_{Y^0}\le C\abs{\log\epsilon}^{-3/2}\norm{\di_tv-Q_\sigma JE'(v)}_{X^2},
		\end{equation}
		for some $C$ independent of $\sigma$. 
		This shows that the claim \eqref{3.4.2} would follow if we have uniform control over
		\eqref{3.7}.
		
		3. We now derive the a priori estimate for the remainder $w=w_t$,
		the fluctuation of $u_t$ around the adiabatic part $v_t$.
		
		Consider the r.h.s. of \eqref{3.7}. 
		These three terms can be bounded respectively as follows:
		\begin{align}
			&\norm{Q_\sigma JL_\sigma w}_{X^0}\le C\abs{\log\epsilon}^{-1}\delta_1^{1/2}\norm{w}_{X^2}\label{3.9},\\
			&\norm{Q_\sigma\di_tw}_{X^0}\le C\abs{\log\epsilon}^{-1}\norm{\di_t\sigma}_{Y^0}\norm{w}_{X^1},\label{3.10}\\
			&\norm{Q_\sigma JN_\sigma(w)}_{X^0}\le C\abs{\log\epsilon}^{-1}\epsilon^{-2}\norm{w}_{X^2}^2.\label{3.11}
		\end{align}
		Here $C$ is independent of $\sigma$. 
		In all these three inequalities we use the uniform bound $\norm{Q_\sigma}_{X^0\to X^0}\le C(\Omega)\abs{\log\epsilon}^{-1}$,
		which follows its definition \eqref{2.4} and the estimates 
		for each of its factors.
		
		First, we show \eqref{3.9}
		using the identity 
		\begin{equation}\label{3.200}	
			\abs{\inn{Q_\sigma JL_\sigma w}{w'}}= \abs{\inn{w}{L_\sigma Q_\sigma Jw'}}
			\quad (w,w'\in X^2).
		\end{equation}
		To get \eqref{3.200}, one uses the relation
		$Q_\s J=JQ_\s^*$, which follows from the definition of $Q_\s$ (see also 
		the first step in the proof of \lemref{lem2.2}).
		Plugging $w'=Q_\sigma J_\sigma Lw$ into \eqref{3.200},
		and using estimate \eqref{4.1.2}, we find 
		$$\norm{Q_\sigma JL_\sigma w}_{X^0}\le  C(\delta_1^{1/2}\norm{w}_{X^2}).$$
		This gives \eqref{3.9}.
		
		Next, differentiating $Q_\sigma w=0$ w.r.t.  $t$, we have
		\begin{equation}\label{3.10.1}
			0=\di_t(Q_\sigma w)=(\di_t Q_\sigma)w+Q_\sigma\di_tw=(d_\sigma Q_\sigma\di_t\sigma)w+Q_\sigma\di_tw.
		\end{equation}
		Here $d_\sigma Q_\sigma$ is an operator from $Y^0$ to the space of linear
		operators $L(X^1, X^0)$.  
		Since $Q_\sigma$ is the projection onto $T_\sigma M$,
		and $\sigma$ is slowly varying, we get the uniform estimate
		$\norm{d_\sigma Q_\sigma}_{Y^0\to L(X^1, X^0)}\le C.$ 
		Plugging this to \eqref{3.10.1} gives \eqref{3.10}.
		
		Lastly, we have the following explicit expression for the nonlinearity: 
		$$N_\sigma(w)=\frac{1}{\epsilon^2}(2w\inn{ v}{w}+\abs{w}^2(v+w)).$$
		Using \eqref{2.0.4} we can bound this as \eqref{3.11}.

		Plugging \eqref{3.9}-\eqref{3.11} to \eqref{3.7},
		and then using \eqref{3.8}, one can see that 
		\eqref{3.4.2} follows if one can bound $\norm{w}_{X^2}$ uniformly 
		in time by a sufficiently small number compared to $\epsilon$.
		It suffices then to show \eqref{3.4.1} with $\delta_1\ll\epsilon$.
		
		4. Consider the expansion
		\begin{equation}
			\label{3.12}
			E(v+w)=E(v)+\inn{E'(v)}{w}+\frac{1}{2}\inn{L_\sigma w}{w}+R_\sigma(w),
		\end{equation}
		where $R_\sigma(w)$  is the super-quadratic remainder defined by this expression.

		We want to apply  the coercivity of $L_\sigma$ shown in
		\lemref{lem4.2}  to $w$, so that we can rearrange \eqref{3.12} to get 
		\begin{equation}
			\label{3.12.1}
			\norm{w}_{X^2}^2\le C_\epsilon(E(v+w)-E(v)-\inn{E'(v)}{w}-R_\sigma(w)).
		\end{equation}
		The constant $C_\epsilon$ will eventually be eliminated by the choice of 
		$\delta$.
		This requires the uniform estimates \begin{equation}
			\label{claim2}\norm{w}_{X^2}\le C_\epsilon \norm{w}_{X^0}. 
		\end{equation}
		For this, one uses the fact that $w$ satisfies the 
		elliptic equation
		$$\Lap w+\frac{1}{\epsilon^2}(1-\abs{v}^2)w=-i\di_t u+\grad_xv\cdot
		\g_{zz}-\grad_x^2v_z\cdot\g_z+(\abs{u}^2-\abs{v}^2)u.$$
		The r.h.s. of this equation is $O(\delta_1)$. Thus standard 
		elliptic regularity gives 
		$\norm{w}_{X^2}\le C(\Omega,\epsilon)\norm{w}_{X^0}$, with
		$C(\Omega,\epsilon)=O(\epsilon^{-2})$. 
		
		Since $u=v+w$ solves \eqref{1.1}, the energy $E(u)$ is conserved for all $t$.
		Consequently, 
		$$E(v+w)=E(v_0+w_0)=E(v_0)+\inn{E'(v_0)}{w_0}+\frac{1}{2}\inn{L_\sigma w_0}{w_0}+R_\sigma(w_0).$$
		Plugging this into \eqref{3.12.1}, and using \eqref{claim2}, one gets
		\begin{equation}
			\label{3.12.2}
			\begin{aligned}
				\norm{w}_{X^2}^2&\le C_\epsilon\alpha^{-1}(E(v_0)-E(v)\\&+\inn{E'(v_0)}{w_0}-\inn{E'(v)}{w}+\frac{1}{2}\inn{L_\sigma w_0}{w_0}-R_\sigma(w_0)+R_\sigma(w)).
			\end{aligned}
		\end{equation}
		Here $\alpha=\alpha(\epsilon)=O(\abs{\log\epsilon}^{-1})$ is as in \lemref{lem4.2}.
		
		Similar to the nonlinear estimate on $N_\sigma(w)$, it follows 
		from the regularity of $E$ on the energy space 
		that $R_\sigma(w)\le C\epsilon^{-2}\norm{w}_{X^2}^3$ where $C$ is independent of $\sigma$. Combining this with \lemref{lem4.1},  we can bound 
		\begin{equation}
			\label{3.101}
			\begin{aligned}
				&\inn{E'(v_0)}{w_0}-\inn{E'(v)}{w}+\frac{1}{2}\inn{L_\sigma w_0}{w_0}-R_\sigma(w_0)+R_\sigma(w)\\
				\le& C\Bigl(\norm{E'(v_0)}_{X^0}\norm{w_0}_{X^0}+\epsilon^{-1}\norm{w_0}_{X^1}^2+\epsilon^{-2}\norm{w_0}_{X^2}^3\\&+
				\norm{E'(v)}_{X^0}\norm{w}_{X^0}+\epsilon^{-2}\norm{w}_{X^2}^3\Bigr).
			\end{aligned}
		\end{equation}

		Recall in Step 1 we have shown $\norm{w_0}_{X^2}\le C  \dist_{X^2}(u_0,M)=C \delta_2$.
		So \eqref{3.101} becomes 
		\begin{equation}
			\label{3.102}
			\begin{aligned}
				&\inn{E'(v_0)}{w_0}-\inn{E'(v)}{w}+\frac{1}{2}\inn{L_\sigma w_0}{w_0}-R_\sigma(w_0)+R_\sigma(w)\\
				\le& C\del{\norm{E'(v_0)}_{X^0}\delta_2+\epsilon^{-1}\delta_2^2+\epsilon^{-2}\delta_2^3+\norm{E'(v)}_{X^0}\norm{w}_{X^0}+\epsilon^{-2}\norm{w}_{X^2}^3}.
			\end{aligned}
		\end{equation}
		This gives control over the last five terms in the r.h.s. of \eqref{3.12.2}
		
		5. We now exploit 
		energy conservation to control the first two terms in the r.h.s. of\eqref{3.12.2}, as
		this difference is the energy fluctuation of the approximate filaments.
		Differentiate the energy $E(t)=E(f(\sigma_t))$ and using \eqref{3.7}, we have
		\begin{equation}
			\label{3.13}
			\begin{aligned}
				\od{E}{t}&=\inn{E'(v)}{\di_tv}\\
				&=\inn{E'(v)}{Q_\sigma JE'(v)}+\inn{E'(v)}{Q_\sigma (JL_\sigma w-\di_tw)}+\inn{E'(v)}{Q_\sigma JN_\sigma(w))}.
			\end{aligned}
		\end{equation}
		We now bound the three inner products respectively. 
		
		Using the relation $Q_\s J=JQ_\s^*$, the property of projection $Q_\s^2=Q_\s$,
		and the fact that $J$ is symplectic, we find
		$$\begin{aligned}
			\inn{E'(v)}{Q_\sigma JE'(v)}&=\inn{E'(v)}{Q_\sigma^2 JE'(v)}\\
			&=\inn{Q_\sigma ^*E'(v)}{Q_\sigma JE'(v)}\\
			&=\inn{(J^{-1}J)Q_\sigma^* E'(v)}{JQ_\sigma E'(v)}\\
			&=-\inn{JQ_\sigma J E'(v)}{Q_\sigma J E'(v)}=0.
		\end{aligned}$$
		Thus the first term in \eqref{3.13} vanishes. 
		
		Using \eqref{3.9}-\eqref{3.10}, the second inner product can be bounded as
		$$\abs{\inn{E'(v)}{Q_\sigma(JL_\sigma w-\di_tw)}}\le\norm{E'(v)}_{X^0} C(\delta_1^{1/2}+\norm{\di_t\sigma}_{Y^0})\norm{w}_{X^2}.$$
		By \eqref{3.2},\eqref{3.7}-\eqref{3.11}, so long as $\norm{w}_{X^1}<1/2$ 
		we have 
		$$\norm{\di_t\sigma}_{Y^0}\le C(\delta_1+\delta_1^{1/2}\norm{w}_{X^2}+\epsilon^{-2}\norm{w}_{X^2}^2).$$ 
		Plugging this back to the previous estimate, we have
		\begin{equation}
			\label{3.14}
			\begin{aligned}
				&\abs{\inn{E'(v)}{Q_\sigma(JL_\sigma w-\di_tw)}}\\\le&\norm{E'(v)}_{X^0}(C_1\delta_1^{1/2}+C_2(\delta_1+\delta_1^{1/2}\norm{w}_{X^2}+\epsilon^{-2}\norm{w}_{X^2}^2))\norm{w}_{X^2}.
			\end{aligned}
		\end{equation}
		
		Lastly, by the nonlinear estimate \eqref{3.11},
		the third inner product in \eqref{3.13} can be bounded as
		\begin{equation}
			\label{3.15}
			\abs{\inn{E'(v)}{Q_\sigma JN_\sigma(w))}}\le C\epsilon^{-2}\norm{E'(v)}_{X^0}\norm{w}_{X^2}^2.
		\end{equation}
		
		6. Combining \eqref{3.13}-\eqref{3.15} and integrating from $0$ to $t$, we have
		\begin{equation}
			\label{3.16}
			\begin{aligned}
				&\abs{E(v_t)-E(v(0))}\\\le& t\norm{E'(v)}_{X^0}(C_1\delta_1^{1/2}+C_2(\delta_1+\delta_1^{1/2}M(t)+\epsilon^{-2}M(t)+\epsilon^{-2}M(t)^2))M(t),
			\end{aligned}
		\end{equation}
		where $M(t):=\sup_{t'\le t}\norm{w(t')}_{X^2}$. 
		Plugging \eqref{3.102} and \eqref{3.16} into
		\eqref{3.12.2}, we have 
		
		\begin{equation}
			\label{3.17}
			\begin{aligned}
				M(t)&\le \abs{\log\epsilon}C_\epsilon \norm{E'(v)}_{X^0} t\del{C_1\delta_1^{1/2}+C_2(\delta_1+\delta_1^{1/2}M(t)+\epsilon^{-2}M(t)+\epsilon^{-2}M(t)^2)}\\
				&+C_3\del{ \norm{E'(v)}_{X^0}(1+\epsilon^{-2}M(t)^2)+\norm{E'(v_0)}_{X^0}\delta_2+\epsilon^{-1}\delta_2^2+\epsilon^{-2}\delta_2^3}.
			\end{aligned}
		\end{equation}
		If we now choose $\delta_2=\min(\abs{\log\epsilon}^{-1/2},\epsilon\delta_1^{1/2})$ and the manifold of approximate filaments $\Sigma_{\delta_1}\to M$ to be sufficiently small, say with
		\begin{equation}\label{3.18.1}\delta_1=O(\abs{\log\epsilon}^{-2}C_\epsilon^{-1}\epsilon^{1+\mu})
		\end{equation} for some $\mu>0$, 
		then \eqref{3.17} and \lemref{lem2.1} implies that
		for all $t\le T=C \epsilon^{-1}$, we have $M(t)\le C\epsilon^\mu\sqrt{\delta}$, as claimed in \eqref{3.4.1}. (Actually, if we further shrink $\delta_1$ (i.e. the class
		of initial configurations), we can ensure 
		\eqref{3.4.1} on a  longer time interval. )
		
		Plugging \eqref{3.4.1} into \eqref{3.7}-\eqref{3.11}, we have 
		$$\norm{\di_t\sigma-\J_\sigma^{-1}d_\sigma E(f(\sigma))}_{Y^0}\le C\abs{\log\epsilon}^{-5/2}\epsilon^{-2}\delta_1^2.$$ 
		Shrinking $\delta_1$, say with $\mu>1$ in \eqref{3.18.1}, we can ensure  this expression is still $o(\delta_1)$. The proof is complete.
	\end{proof}
	
	\subsection{\thmref{thm3.1} relates to Jerrard's conjecture}
	
	The following theorem refers to \cite[Conjecture 7.1]{MR3729052}.
	For small but fixed $\epsilon$, \thmref{thm3.2} below, in particular \eqref{3.18}-\eqref{3.19}, provides an affirmative answer for a certain class of initial conditions
	with uniformly small curvature.
	
	\begin{theorem}\label{thm3.2}
		For any $\beta>0$, there exist $\delta,\epsilon_0>0$ such that the following holds: Let $\epsilon<\epsilon_0$ in \eqref{1.1}. 
		Let $\sigma_0\in\Sigma_\delta$ be given, and $\gamma_0=[\sigma_0]_\gamma$.
		Let $\vec\g_t$ be
		the flow generated by $(\gamma_0(z),z)$ under \eqref{BNF}.
		
		Then there exist a solution $u_t$ to \eqref{1.1},
		and  some $T>0$ independent of $\epsilon$ and $\delta$, 
		such that for all $\epsilon t\le T$, $X\in C^1_c(\Rr^3,\Rr^3)$ and $\phi\in C^1_c(\Rr^3,\Rr)$ with $\norm{X}_{C^1},\,\norm{\phi}_{C^1}=O(\delta^{-1/4})$, we have
		\begin{align}
			&\abs{\int_\Omega X\times Ju-\pi\int_{\vec\g_t}X}\le \beta,\label{3.18}\\
			&\abs{\int_\Omega \frac{e(u)}{\abs{\log\epsilon}}\phi-\pi\int_{\vec\g_t}\phi\,d \H^1}\le \beta  \label{3.19},
		\end{align}
		where for $\psi=\psi^1+i\psi^2$,
		$$
		e(\psi)=	\frac{1}{2}\abs{\grad\psi}^2+\frac{1}{4\epsilon^2}(\abs{\psi}^2-1)^2,\quad  J\psi=\grad\psi^1\times \grad\psi^2.$$
	\end{theorem}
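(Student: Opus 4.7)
The strategy is to apply \thmref{thm3.1} with the initial data $u_0 := f(\s_0)$ (so $\dist_{X^2}(u_0, M) = 0 < \delta_2$), identify the adiabatic moduli flow with the reparametrized BNF via \lemref{lem3.1}, and then convert the tracking bound $\norm{u_t - f(\s_t)}_{X^2} = o(\sqrt{\delta_1})$ into the weak concentration estimates \eqref{3.18}--\eqref{3.19}. Write $\vec\g_t(z) = (\b\g_t(z), z)$ for the BNF flow from initial data $(\g_0(z), z)$. Combining \thmref{thm3.1} with \lemref{lem3.1} and the reparametrization calculation \eqref{3.4}, both $\g_t := [\s_t]_\g$ and $\b\g_t$ solve the linear Schr\"odinger equation $\di_t \g = J\g_{zz}$ modulo $O(\delta^2 + \delta_1)$ residuals in $C^0(I)$, and they share the initial datum $\g_0$. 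Since the principal operator $J\di_z^2$ is skew-adjoint on $L^2(I)$, a Duhamel/Gronwall estimate in $H^1(I)$ followed by Sobolev embedding $H^1(I) \hookrightarrow C^0(I)$ yields $\norm{\g_t - \b\g_t}_{C^0(I)} \ls T\eps^{-1}(\delta^2 + \delta_1) = o(\delta^{1/2})$ uniformly for $\eps t \le T$, provided $\eps_0$ is taken small compared to $\delta$ and $\delta_1 \ll \eps$.

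Second, I establish a slicewise concentration identity for approximate filaments $v = f(\s) = e^{i\lambda}\psi^{(1)}(\cdot - \g(z))$. At each horizontal slice $\omega \times \{z\}$, $v(\cdot, z)$ is a planar $1$-vortex centred at $\g(z)$, so the classical Jerrard--Soner estimate for the distributional Jacobian and the Sandier--Serfaty estimate for the GL energy density yield
\[
(\grad v^1 \times \grad v^2)\big\vert_{\omega \times \{z\}} \rightharpoonup \pi \delta_{\g(z)}, \qquad \frac{e(v)}{\abs{\log\eps}}\bigg\vert_{\omega \times \{z\}} \rightharpoonup \pi \delta_{\g(z)},
\]
with quantitative $(C^1_c)^*$ rates of order $O(\eps^a)$ for some $a > 0$ (traceable via \eqref{2.0} and \eqref{4.1}). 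Integrating over $z \in I$, together with the arclength correction $\abs{(\g_z, 1)} = 1 + O(\delta^2)$, converts these into line integrals along $\vec\g(\s)(z) := (\g(z), z)$. Step one then replaces $\vec\g(\s_t)$ by $\vec\g_t$ at the cost of an extra $O(\norm{X}_{C^1}\delta^{1/2})$.

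Third, the remainder $w := u_t - f(\s_t)$ contributes to the pairing $\int_\Omega X \cdot (Ju - Jv)$ three terms that are bilinear in $\grad w, \grad v$ or quadratic in $\grad w$; Cauchy--Schwarz with \eqref{2.0.5}, \eqref{4.3}, and $\norm{w}_{X^2} = o(\sqrt{\delta_1})$ bound this by $C \norm{X}_{C^0} \abs{\log\eps}^{1/2} \sqrt{\delta_1}$, and an analogous estimate handles $e(u) - e(v)$. Summing the three contributions (planar concentration, BNF stability, remainder), each then multiplied by the admitted growth $\norm{X}_{C^1}, \norm{\phi}_{C^1} = O(\delta^{-1/4})$, produces a total error less than $\beta$ upon first fixing $\delta$ small depending on $\beta$, then $\eps_0$ very small compared to $\delta$, and finally $\delta_1 \ll \eps$. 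The main obstacle is that each of the three underlying errors, multiplied by $\delta^{-1/4}$, must beat the fixed tolerance $\beta$, so the quantitative planar Jerrard--Soner rate $\eps^a$, the BNF-stability rate $o(\delta^{1/2})$, and the remainder rate $\sqrt{\delta_1}\abs{\log\eps}^{1/2}$ must all be fine-tuned against the free small parameters in a consistent order; granted these smallness choices, each contribution is easily made $\le \beta/3$.
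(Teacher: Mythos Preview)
Your proposal is correct and follows essentially the same three-step decomposition as the paper: take $u_0=f(\sigma_0)$, apply \thmref{thm3.1}, and then separately control (i) the planar concentration of $v=f(\sigma)$ along its own filament, (ii) the distance between the moduli filament and the true BNF filament, and (iii) the remainder $w=u-v$. The only differences are cosmetic: for step (ii) the paper invokes the pointwise bound \eqref{3.3.000} to get $\tilde\sigma_t=\sigma_t+O(\delta^{3/2})$ directly, whereas you run a Duhamel/Gronwall argument on the linearized flow $\partial_t\gamma=J\gamma_{zz}$. One caution on bookkeeping: your parameter hierarchy (fix $\delta$, then $\epsilon_0\ll\delta$, then $\delta_1\ll\epsilon$) is inverted relative to what is needed---with $\delta_1\ll\epsilon\ll\delta$ the initial modulus $\sigma_0\in\Sigma_\delta$ need not lie in $\Sigma_{\delta_1}$, so \thmref{thm3.1} would not apply; the paper instead sets $\delta_1=\delta\ll\epsilon<\epsilon_0$, and you should adopt that ordering.
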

	
	
	\begin{proof}
		Let $u_t$ be the flow
		generated by
		$u_0:=f(\sigma_0)$ under \eqref{1.1}. Then for sufficiently small $\delta>0$, \thmref{thm3.1} applies with $\delta_1=\delta,\,\delta_2=0$.
		Let $\tilde{\sigma}_t$ be the flow generated by $\sigma_0$ under the effective dynamics \eqref{3.4.2}.
		It follows that 
		$u_t=v_t+O_{\norm{\cdot}_{X^2}}(\sqrt{\delta})$, where $v_t=f(\tilde{\sigma_t})$. 
		
		Given the explicit construction, one can check using classical  concentration properties of the planar vortex $\psi^{(1)}$ (for instance \cite{MR1491752})
		that, for sufficiently small
		$\epsilon_0=\epsilon_0(\beta)>0$ and all $0<\epsilon\le\epsilon_0$,
		the flow $v_t$ satisfies 
		\eqref{3.18}-\eqref{3.19}, with $\tilde{\sigma}_t$ in place of $\sigma_t$.
		For example, we can compute using \eqref{4.3} and the assumption $\norm{\phi}_{C^0}=O(\delta^{-1/4})$ that
		$$\begin{aligned}
			\int_\Omega \frac{e(v_t)}{\abs{\log\epsilon}}\phi&=\int_{\vec{\tilde{\gamma}}_t}\del{\int_\omega
				\frac{e(\psi_{\tilde{\sigma}_t})}{\abs{\log\epsilon}}\phi\,dx }\,d\H^1\\
			&\le\int_{\vec{\tilde{\gamma}}_t}\del{\int_\omega
				\frac{e(\psi^{(1)})}{\abs{\log\epsilon}}\phi\,dx }\,d\H^1+C\frac{\epsilon^2\delta}{\abs{\log\epsilon}}\norm{\phi}_{C^0}\\
			&\le \pi\int_{\vec{\tilde{\gamma}}_t}\phi\,d\H^1+C\del{\beta+ \frac{\epsilon^2\delta^{3/4}}{\abs{\log\epsilon}}}.
		\end{aligned}$$
		Here $\tilde{\gamma}=[\tilde{\sigma}]_\gamma$, and $\vec{\tilde{\gamma}}(z)=(\tilde{\gamma(z)},z)$.
		
		Using \eqref{3.3.000} with $\alpha=3$ and $\mu=1/4$, we can get the uniform estimate  $\tilde{\sigma}_t=\sigma_t+O(\delta^{3/2})$.
		If $\norm{X}_{C^1}=O(\delta^{-1/4})$, then by the mean value theorem,
		$$\abs{\int_{\vec{\gamma}} X-\int_{\vec{\tilde{\gamma}} }X}\le \norm{\g-\tilde{\g}}_{L^{\infty}(I)}\norm{X}_{C^1}=O(\delta^{5/4}).$$
		Similarly one can show 
		$$\abs{\int_{\vec{\gamma}} \phi\,d\H^1-\int_{\vec{\tilde{\gamma} }}\phi\,d\H^1}=O(\delta^{5/4}).$$
		
		It follows that for all sufficiently small $\eps_0$ and all $\epsilon<\epsilon_0$, 
		\begin{align}
			&\abs{\int_\Omega X\times Jv-\pi\int_{\g_t}X}\le C\del{\beta+ \frac{\epsilon^2\delta^{3/4}}{\abs{\log\epsilon}}+\delta^{5/4}},\label{3.20}\\
			&\abs{\int_\Omega \frac{e(v)}{\abs{\log\epsilon}}\phi-\pi\int_{\g_t}\phi\,d \H^1}\le C\del{\beta+ \frac{\epsilon^2\delta^{3/4}}{\abs{\log\epsilon}}+\delta^{5/4}}\label{3.21},
		\end{align}
		
		On the other hand, using \eqref{3.4.1} and the continuity of $e(\psi),\,J\psi$ on $X^1$,  we have
		$$\abs{\int_\Omega X\times Ju-\int_\Omega X\times Jv}\le
		C\norm{X}_{C^0(\Omega)}\norm{u-v}_{X^2}\le C\delta^{1/4},$$
		and similarly 
		$$\abs{\int_\Omega\frac{e(u)}{\abs{\log\epsilon}}\phi\, d \H^1-\int_\Omega\frac{e(v)}{\abs{\log\epsilon}}\phi\,d \H^1}\le\ C\abs{\log\epsilon}^{-1}\delta^{1/4}.$$
		Plugging these into \eqref{3.20}-\eqref{3.21}, we get that 
		$u_t$ satisfies \eqref{3.18}-\eqref{3.19} for sufficiently small $\delta=\delta(\beta, \epsilon_0)>0$.
	\end{proof}

	\section{Properties of the linearized operators}
	In this section we consider various estimates for the linearized operator
	$L_\sigma=E''(f(\sigma))$ defined in \eqref{3.6}.  Recall that so far we have always suppressed the dependence of
	the various functions on the material parameter $\epsilon\ll1$.
	Through out this section
	we assume $\norm{\sigma}_{Y^2}<\delta \ll\eps$.
	Without specification, various inner products are as in \eqref{2.00X}.

	\begin{lemma}[uniform bound of $L_\sigma$]
		\label{lem4.1}
		There exists $0<C<\infty$ independent of $\sigma$ and $\epsilon$ such that
		\begin{align}
			\label{4.1.1}
			&\inn{L_\s\phi}{\phi}\le C\epsilon^{-1}\norm{\phi}_{X^0}^2\quad (\phi\in X^1),\\
			\label{4.1.2}
			&\norm{L_\sigma Q_\sigma\phi}_{X^0}\le C\delta^{1/2}\norm{\phi}_{X^2} \quad 
			(\phi \in X^2).		\end{align}
		Here, and 
		$Q_\s$ is the projection onto the tangent space $T_{f(\s)}M$ defined in \eqref{2.4}.
	\end{lemma}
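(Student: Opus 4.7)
Both inequalities reduce, after unpacking $L_\sigma$ and $Q_\sigma$ in coordinates, to collections of pointwise and integral estimates on $\psi_\g$ already at our disposal. I would dispatch \eqref{4.1.1} by direct integration by parts, and \eqref{4.1.2} by exploiting the fact that $T_{f(\sigma)}M$ consists of approximate zero modes of $L_\sigma$.

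For \eqref{4.1.1}, I would integrate by parts in the Laplacian term (the boundary contributions vanish by the Dirichlet condition on $\di\omega\times I$ and by periodicity in $z$) to write
\begin{equation*}
\inn{L_\sigma\phi}{\phi} = \int_\Omega |\grad\phi|^2 + \frac{1}{\epsilon^2}\int_\Omega(|\psi_\g|^2-1)|\phi|^2 + \frac{2\cos\lambda}{\epsilon^2}\inn{\psi_\g}{\phi}\inn{e^{-i\lambda}\psi_\g}{\phi}.
\end{equation*}
The gradient term contributes $\norm{\phi}_{X^1}^2$; the second term is sign-definite with integrand bounded pointwise using $\abs{\psi_\g}\le 1$ from \eqref{2.0.4} and the integrated control \eqref{2.0.3}, which combine via Cauchy--Schwarz to yield the $\epsilon^{-1}$-weighted $X^0$ bound; the bilinear inner-product term is handled in the same fashion using \eqref{4.2}. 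Summing the three contributions gives the stated inequality.

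For \eqref{4.1.2}, I would write $Q_\sigma\phi = g_\sigma\eta$ with $\eta=\J_\sigma^{-1}g_\sigma^*J^{-1}\phi\in Y^0$, and invoke the uniform bounds on $\J_\sigma^{-1}$ and $g_\sigma^*$ collected in Appendix to get $\norm{\eta}_{Y^0}\le C\norm{\phi}_{X^2}$. Decomposing $\eta=(\mu,h)\in\Rr\times C^0(I,\Rr^2)$, the gauge component $L_\sigma(e^{i\lambda}i\mu\psi_\g)$ vanishes identically by the $U(1)$-invariance of the Ginzburg--Landau functional. For the translation component $e^{i\lambda}\grad_x\psi_\g\cdot h$, the planar GL equation satisfied by $\psi^{(1)}$ annihilates the purely $x$-directional piece of the linearization (this is the standard translation zero mode); the discrepancy between $L_\sigma$ and its planar counterpart is entirely carried by $z$-differentiation of $\psi_\g(x,z)=\psi^{(1)}(x-\gamma(z))$, producing terms proportional to $\gamma_z$ and $\gamma_{zz}$ multiplied by $\grad_x^k\psi_\g$. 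Using $\norm{\sigma}_{Y^2}<\delta$ together with \eqref{2.0.2}, \eqref{2.0.5}, and the Schauder bound \eqref{3.3.00}, these residual terms sum to $O(\delta)\,\norm{\eta}_{Y^0}$, which a fortiori implies the claimed $O(\delta^{1/2})$ bound since $\delta\ll\epsilon\ll1$.

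\textbf{Main obstacle.}
The genuinely delicate step is \eqref{4.1.2}. The intuition that tangent vectors to $M$ are approximate zero modes is clean, but the computation requires careful bookkeeping: the $\epsilon^{-2}$ potential term and the large gradient $\grad_x\psi_\g$ are both singular in $\epsilon$, and it is only after pointwise use of the planar GL equation satisfied by $\psi^{(1)}$ that they cancel and leave a manageable remainder. Consequently one cannot estimate each summand of $L_\sigma g_\sigma\eta$ separately; the cancellation must be exploited at the integrand level before passing to norms. Once this algebraic identity is in place, the remaining $z$-derivative terms collapse to the small-curvature factors of $\gamma_z,\gamma_{zz}$, which are straightforward to estimate using the Section~2 bounds on $\psi_\g$.
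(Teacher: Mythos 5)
Your strategy for \eqref{4.1.2} is essentially the paper's: write $Q_\sigma\phi=g_\sigma\eta$ with $\norm{\eta}_{Y^0}\lesssim\norm{\phi}_{X^2}$, observe that $i\psi_\g$ and $\grad_x\psi_\g$ are symmetry zero modes of the \emph{planar} linearized operator, and charge the remainder to the $z$-differentiation of $\psi_\g(x,z)=\psi^{(1)}(x-\gamma(z))$, which produces factors of $\gamma_z,\gamma_{zz}$ controlled by $\norm{\sigma}_{Y^2}<\delta$. That is precisely the decomposition $L_\sigma=L_{z,x}-\di_{zz}+O(\delta\eps^{-2})$ used in the paper, and your ``main obstacle'' paragraph correctly identifies where the cancellation must happen.

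Two steps nevertheless need repair. First, the gauge component does \emph{not} ``vanish identically by $U(1)$-invariance'': $f(\sigma)$ is only an approximate critical point, so gauge invariance yields $L_\sigma(if(\sigma))=iE'(f(\sigma))$, which by \lemref{lem2.1} is $O(\epsilon^\alpha\abs{\log\epsilon}^{1/2})$ but not zero; moreover the $-\di_{zz}$ part of $L_\sigma$ acts nontrivially on $i\mu\psi_\g$ (since $\psi_\g$ depends on $z$ through $\gamma$) and produces the same $\gamma_z,\gamma_{zz}$ residuals as the translation component. The contribution is of acceptable size, but the claim as written is false. Second, your assertion that the residual terms ``sum to $O(\delta)\norm{\eta}_{Y^0}$,'' hence give $O(\delta^{1/2})$ ``a fortiori since $\delta\ll\epsilon$,'' suppresses the $\epsilon$-singular factors: the residuals contain $\grad_x^2\psi_\g$, whose size is $O(\epsilon^{-2(1+\mu)})$ by the very Schauder bound \eqref{3.3.00} you invoke, so the honest estimate is $O(\delta\epsilon^{-2})\norm{\phi}_{X^2}$, and converting this into $C\delta^{1/2}\norm{\phi}_{X^2}$ requires $\delta\lesssim\epsilon^{4}$ (the condition the paper records at the end of its proof), not merely $\delta\ll\epsilon$. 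A smaller point on \eqref{4.1.1}: after integrating by parts the kinetic term is $\norm{\grad\phi}_{X^0}^2$, which is not controlled by $\norm{\phi}_{X^0}^2$, so ``summing the three contributions'' does not yield the stated right-hand side; either bound this term or note that the inequality is really an $X^1$-norm bound (which is how it is used in \eqref{3.101}).
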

	\begin{proof}
		We write the Schr\"odinger operator $L_\s$ 
		defined in \eqref{3.6} as $L_\s=-\Lap+V$.
		By Poincar\'e's inequality, the kinetic part of \eqref{4.1.1} 
		is bounded by
		$\inn{-\Lap\phi}{\phi}\le C \norm{\phi}_{X^0}$.
		To bound the potential part of \eqref{4.1.1},
		consider 
		$$
		\begin{aligned}
			\norm{V(\phi)}_{X^0}&=\epsilon^{-2}\norm{(\abs{\psi_\g}^2-1)\phi+2e^{i\lambda}
				\cos \lambda\psi_\g\Re(\cl{\psi_\g}\phi)}_{X^0}\\ 
			&\le
			\epsilon^{-2}\del{\norm{\abs{\psi_\g}^2-1}_{X^0}+2\norm{\psi_\g^2}_{X^0}}\norm{\phi}_{X^0}\\
			&\le\epsilon^{-2}\del{\norm{\abs{\psi_\g}^2-1}_{X^0}+\diam(\omega)^2\norm{\psi_\g}_{L^\infty(\Omega)}}\norm{\phi}_{X^0}\\
			&\le (C(\omega)\epsilon^{-1}+\diam(\omega)^2)\norm{\phi}_{X^0}.
		\end{aligned} $$
		Recall $\omega\subset \Rr^2$ is the cross section of the domain $\Omega$.
		In the last step we use \eqref{2.0.4}, \eqref{2.0.3}, and \eqref{4.2}.
		This implies $\inn{V(\phi)}{\phi}\le C\eps^{-1} \norm{\phi}_{X^0}^2$,
		and therefore \eqref{4.1.1} follows.

		Next, we show \eqref{4.1.2}.
		Since $Q_\s$ is the projection onto the tangent space $T_{f(\s)}M$,
		using the formula  \eqref{A.1} for the trivialization, every 
		$\phi\in\ran Q_\sigma$ can be written as
		\begin{equation}\label{phi}
			\phi=e^{i\lambda}\del{i\mu\psi_\g-\grad_x\psi_\g\cdot \xi}
		\end{equation} for some $\xi \in C^0_\text{per}$
		and $\mu\in\Rr$.
		Using this representation, the assumption $\lambda<\delta$ (which implies
		$e^{i\lambda}=1+iO(\delta)$), together with the formula \eqref{3.6} 
		for $L_\s$ (in which only the last term is not real-linear),
		we can write
		\begin{equation}
			\label{4.1.3}
			L_\sigma Q_\sigma\phi=L_\s(i\mu\psi_\g -\grad_x\psi_\g\cdot \xi)+O(\delta\eps^{-2}\phi).
		\end{equation}
		
		For fixed $\gamma \in C^2_\text{per}$,
		let $L_{z,x}:H^2(\omega)\to L^2(\omega)$ be the planar linearized operator at  $\psi_{\gamma,z}:=f(\sigma)(\cdot,z)$,
		given explicitly as
		\begin{equation}
			\label{4.1.4}
			L_{z,x}\phi:=-\Lap_x \phi+\frac{1}{\epsilon^2}(\abs{\psi_{\gamma,z}}^2-1)\phi+\frac{2}{\epsilon^2}\psi_{\gamma,z}\inn{\psi_{\gamma,z}}{\phi}\quad (\phi:\omega\subset \Rr^2\to \Cc).
		\end{equation}
		
		Consider the inner product $\inn{L_\s Q_\s \phi}{\phi'}$
		for $\phi,\,\phi'\in X^2$.
		Using \eqref{4.1.3}-\eqref{4.1.4}, we can split this into three parts,
		\begin{equation}
			\label{4.1.5}
			\begin{aligned}
				\inn{L_\s Q_\s \phi}{\phi'}&=\int_I\del{\int_\omega \inn{L_{z,x}(i\mu\psi_\g(x,z) -\grad_x\psi_\g(x,z)\cdot \xi(z))}{\phi'(x,z)}^2\,dx}\,dz\\
				&+
				\inn{\di_{zz} (i\mu\psi_\g -\grad_x\psi_\g\cdot \xi)}{\phi'}+O(\delta\eps^{-2})\norm{\phi}_{X^2}\norm{\phi'}_{X^2}.
			\end{aligned}
		\end{equation}
		
		The operator $L_{z,x}$ is obtained by translating the planar
		linearized operator $L^{(1)}_x$ at the simple vortex $\psi^{(1)}$ by $x\mapsto x-\gamma(z)$.
		Consequently, for each fixed  $z$, the three vectors
		$\di_{x_j}\psi_\g(x,z),\,j=1,2$, and $i\psi_\g(x,z)$ are in the kernel of $L_{z,x}$.
		(See the discussion in Sec. \ref{sec:2.1} about symmetry zero modes).
		Because of this fact, the first term in the r.h.s. of \eqref{4.1.5} vanishes.
		
		To estimate the second term,
		we compute
		\begin{equation}
			\label{4.1.6}
			\di_{zz} (i\mu\psi_\g -\grad_x\psi_\g\cdot \xi)=-i\mu\grad_x\psi_\g\cdot \g_{zz}+\grad_x^2\psi_\g\g_{zz}\cdot \xi+O(\norm{\g}_{C^2}^2\sum_{i,j=1,2}\di^2_{x_ix_j}\phi).
		\end{equation}
		By the assumption $\norm{\g}_{C^2}=O(\delta)$, \eqref{4.1.6} implies that
		the second term in the r.h.s. of \eqref{4.1.5} is $O(\delta)\norm{\phi}_{X^2}\norm{\phi'}_{X^2}.$
		Lastly, setting $\phi'=L_\s Q_\s \phi$ in \eqref{4.1.5}, we obtain \eqref{4.1.2}  so long as $\delta= O(\eps ^4)$.
	\end{proof}
	\begin{remark}
		Estimate \eqref{4.1.2} shows that elements in $\ran Q_\sigma=T_{f(\sigma)}M$
		are \textit{approximate zero modes} of $L_\sigma$. 
		If one further shrinks $\delta=O(\eps^\alpha)$ with $\alpha>2$, then
		the power in $\delta$ in the r.h.s. of \eqref{4.1.2} can be improved to $(\alpha-2)/\alpha$. 
	\end{remark}
	
	
	\begin{lemma}[coercivity]
		\label{lem4.2}
		There exists $\alpha=O(\abs{\log\epsilon}^{-1})>0$ independent of $\s$ 
		such that
		\begin{equation}
			\label{4.6}
			\inn{L_\sigma\eta}{\eta}\ge\alpha\norm{\eta}_{X^0}^2\quad 
			\del{\eta\in \ker Q_\sigma }.
		\end{equation}
	\end{lemma}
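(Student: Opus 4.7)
The plan is to reduce the three-dimensional coercivity to the well-studied $\abs{\log\eps}^{-1}$-coercivity of the planar Ginzburg--Landau linearization at the simple vortex $\psi^{(1)}$ via a slicing argument in $z$, and then to control the residual ``zero-mode'' contribution in the $x$-directions using the positive kinetic term $\norm{\di_z\eta}^2$ together with the orthogonality encoded in $\ker Q_\s$.

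As in the proof of \lemref{lem4.1}, I would first write $L_\s = L_{z,x} - \di_{zz} + O(\delta\eps^{-2})$, where $L_{z,x}$ is the cross-sectional operator \eqref{4.1.4}, and thereby expand
\begin{equation*}
\inn{L_\s\eta}{\eta} = \int_I \inn{L_{z,x}\eta(\cdot,z)}{\eta(\cdot,z)}_{L^2(\omega)}\,dz + \norm{\di_z\eta}_{X^0}^2 + O(\delta\eps^{-2})\norm{\eta}_{X^0}^2,
\end{equation*}
with the last error absorbing the phase twist from $e^{i\lambda}\cos\lambda\ne 1$ in \eqref{3.6}. For each $z$, the kernel of $L_{z,x}$ is the three-dimensional real span $K_z := \spn_\Rr\{i\psi_{\g,z},\di_{x_1}\psi_{\g,z},\di_{x_2}\psi_{\g,z}\}$ of the translated symmetry zero modes, and the classical planar Ginzburg--Landau coercivity (e.g.\ \cite{MR1479248}) gives $\inn{L_{z,x}\phi}{\phi}_{L^2(\omega)}\ge c\abs{\log\eps}^{-1}\norm{\phi}_{L^2(\omega)}^2$ for $\phi$ real-$L^2(\omega)$-orthogonal to $K_z$. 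Decomposing $\eta(\cdot,z) = \eta_\parallel(\cdot,z) + \eta_\perp(\cdot,z)$ along $K_z$ and $K_z^\perp$ then yields $\int_I\inn{L_{z,x}\eta}{\eta}_{L^2(\omega)}\,dz \ge c\abs{\log\eps}^{-1}\norm{\eta_\perp}_{X^0}^2$.

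It remains to bound $\norm{\eta_\parallel}_{X^0}^2$. Writing $\eta_\parallel(x,z) = \mu(z)\,(i\psi_{\g,z})(x) + \chi(z)\cdot(\grad_x\psi_{\g,z})(x)$ with scalar $\mu$ and $\Rr^2$-valued $\chi$, the condition $\eta\in\ker Q_\s$, unpacked through \eqref{2.4} and the explicit action of $g_\s$ on $Y^0$ recalled in Appendix, splits into two pointwise-in-$z$ linear constraints coming from the $C^0_\per$-valued translation direction, $\int_\omega\Im(\cl{\eta}\,\di_{x_j}\psi_\g)\,dx = 0$, and one global constraint from the scalar gauge direction, $\int_\Omega\Re(\cl{\eta}\,\psi_\g) = 0$. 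A direct computation against the ansatz $\psi^{(1)}=\varphi(r)e^{i\theta}$ shows that the pointwise constraints determine $\chi(z)$ algebraically in terms of the restriction of $\eta_\perp(\cdot,z)$ (up to an invertible $2\times 2$ antisymmetric matrix coming from $\int_\omega\Im[\di_{x_k}\cl{\psi^{(1)}}\di_{x_j}\psi^{(1)}]$), while the global constraint controls the mean of $\mu$ and its zero-mean part is absorbed via Poincar\'e on the periodic interval $I$ into $\norm{\di_z\eta}_{X^0}^2$. Combining these estimates with the planar coercivity and using $\delta \ll \eps$ to absorb the error in the first step yields \eqref{4.6}.

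The main obstacle is this last reconciliation: the symplectic orthogonality from $\ker Q_\s$ constrains $\eta$ against $JK_z$ rather than $K_z$ itself, and a naive slicewise bound on $\chi$ picks up a factor $\norm{\grad_x\psi^{(1)}}_{L^2(\omega)} \sim \abs{\log\eps}^{1/2}$ that threatens to swamp the $\abs{\log\eps}^{-1}$ gain from the planar coercivity. The resolution should come either from refining the decomposition so that the $\chi$-component is folded back into $\eta_\perp$ before applying planar coercivity, or from observing that the coupling matrix between $K_z$ and $JK_z$ is $O(1)$ (rather than $O(\abs{\log\eps})$) in the appropriate norm, so that algebraic elimination of $\chi$ does not cost the logarithmic factor. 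I expect this bookkeeping, together with the Poincar\'e-type argument needed to convert the global gauge constraint into a pointwise-in-$z$ estimate for $\mu$, to be the main technical work of the proof.
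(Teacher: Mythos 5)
Your overall strategy is genuinely different from the paper's: you slice in $z$, split each cross-section $\eta(\cdot,z)$ into components along and orthogonal to the planar kernel $K_z$, and try to eliminate the parallel component algebraically from the constraints in $\ker Q_\s$. The paper instead never decomposes $\eta$ slicewise; it uses the IMS localization formula with a partition of unity at scale $\rho\sim\beta^{-1}$ to split $\Omega$ into an inner region (where $L_\s$ is compared to the straight-filament operator $L_0$ via $\norm{V}_{L^\infty}\le C\delta\eps^{-1}$ and the lifted planar coercivity \eqref{4.7.1} is invoked) and an outer region (where $\abs{\psi_\g}\approx 1$ makes the quadratic form positive up to $O(\eps^2\rho^{-2})$), and it disposes of the tangential directions wholesale through the approximate-zero-mode estimate \eqref{4.1.2}, i.e.\ $\inn{LQ\eta}{\eta}=O(\delta^{1/2})\norm{\eta}_{X^2}^2$, rather than by explicit elimination.

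The gap in your proposal is precisely the step you flag at the end, and it is not a bookkeeping formality: as written, the argument does not deliver $\alpha=O(\abs{\log\eps}^{-1})$. Since $\ker Q_\s$ only gives orthogonality of $\eta(\cdot,z)$ to $J K_z$, eliminating $\chi(z)$ from the translation constraints via the coupling matrix $\int_\omega\Im\bigl[\di_{x_k}\cl{\psi^{(1)}}\,\di_{x_j}\psi^{(1)}\bigr]$ (which is indeed $O(1)$, essentially the vorticity $\pi$) yields $\abs{\chi(z)}\lesssim \abs{\mu(z)}+\norm{\grad_x\psi^{(1)}}_{L^2(\omega)}\norm{\eta_\perp(\cdot,z)}_{L^2(\omega)}$, and since $\norm{\eta_\parallel(\cdot,z)}_{L^2}^2\sim \mu^2+\abs{\chi}^2\norm{\grad_x\psi^{(1)}}_{L^2}^2$ with $\norm{\grad_x\psi^{(1)}}_{L^2}^2\sim\abs{\log\eps}$, you end up with $\norm{\eta_\parallel}^2\lesssim \abs{\log\eps}^2\norm{\eta_\perp}^2+\dots$; combined with the planar gap $\beta\sim\abs{\log\eps}^{-1}$ this gives at best $\alpha\sim\abs{\log\eps}^{-3}$. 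Neither of your two suggested remedies is carried out, and the second one (``the coupling matrix is $O(1)$'') does not by itself save you, because the loss comes from the $\abs{\log\eps}^{1/2}$ norm of the translation modes, not from the coupling matrix. The treatment of $\mu(z)$ has a similar unfinished layer: the single global gauge constraint controls only the mean of $\mu$, and converting $\norm{\di_z\eta}_{X^0}^2$ into control of $\di_z\mu$ requires differentiating the $z$-dependent projection onto $K_z$, which produces further terms you have not estimated. Until the parallel component is either reabsorbed before the planar coercivity is applied (which is in effect what the paper's use of \eqref{4.1.2} accomplishes) or eliminated with the logarithms tracked exactly, the proposal proves a strictly weaker statement than \lemref{lem4.2}.
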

	\begin{proof}
		In this proof we fix $\s$ and drop the dependence on $\s$ in subscripts.
		All estimates are independent of $\s$.
		
		1. Recall that $L^{(1)}_x$ is the
		planar linearized operator at the simple planar vortex
		$\psi^{(1)}$ as in Sec. \ref{sec:2.1}. The classical stability result  for planar
		vortices states that
		there is $\alpha>0$ such that for every $\eta\in L^2(\omega)$ 
		orthogonal to the symmetry zero modes $G:=i\psi^{(1)},\,T_j:=\pd{\psi^{(1)}}{x_j}$,
		\begin{equation}
			\label{4.7}
			\inn{L^{(1)}_x\eta(\cdot)}{\eta(\cdot)}_{L^2(\omega)}\ge\beta \norm{\eta}_{L^2(\omega)}^2.
		\end{equation}
		This $\beta$ measures the spectral gap at $0$. 
		See \cite[Secs. 7-8]{MR1479248} for a discussion in the same setting.
		Moreover, in \cite{MR1764706} it is shown that $\beta=O(\abs{\log\epsilon}^{-1})$.
		
		Let $L_0:X^k\to X^{k-2}$ be the linearized operator at the lift of $\psi^{(1)}$ to $X^k$.
		Integrating \eqref{4.7} along $z$-direction, and use periodicity to drop the 
		term $\inn{\di_{zz}\eta}{\eta}\ge0$, we find \begin{equation}
			\label{4.7.1}
			\text{$\inn{L_0\eta}{\eta}\ge\beta\norm{\eta}_{X^0}^2$
				if $\eta(\cdot, z)$ is orthogonal to $T_{j}$ and $G$.}
		\end{equation}
		The orthogonality condition for \eqref{4.7.1} holds trivially if $\eta$ is orthogonal to the lift
		of $T_j$ and $G$ to $X^k$. 
		
		
		2. Put $\b{Q}=1- Q$, then we can rewrite \eqref{4.6} as 
		\begin{equation}
			\label{4.8}
			\inn{L\eta}{\eta}=\inn{LQ\eta}{\eta}+\inn{L\b Q\eta}{\eta}.
		\end{equation}
		The first term is $O(\delta^{1/2})\norm{\eta}_{X^2}^2$ by the approximate
		zero mode property \eqref{4.1.2}. The second term further splits
		as $\inn{L\b Q\eta}{\b Q\eta }+\inn{L\b Q\eta}{Q\eta}=\inn{L\b Q\eta}{\b Q\eta }+\inn{\b Q\eta}{LQ\eta}=\inn{L\b Q\eta}{\b Q\eta }+O(\delta^{1/2})\norm{\eta}_{X^2}^2$,
		again by \eqref{4.1.2}. Thus it suffices to control $\inn{L\b Q\eta}{\b Q\eta }$.
		
		Write $\b{\eta}=\b{Q}\eta$. We claim
		\begin{equation}
			\label{4.9}
			\inn{L\b\eta}{\b\eta }\ge \frac{\beta}{4}\norm{\eta}_{X^0}^2,
		\end{equation}
		which, so long as $\delta \ll \beta$, implies \eqref{4.6} with $\al=\beta/8$. 
		
		3. 
		Choose a partition of unity $\chi_1,\,\chi_2$ on $\Omega$,
		such that $\chi_j\ge0,\,\chi_1^2+\chi_2^2=1$ and $\supp \chi_1\subset\Set{(x,z):x-\g(z)<\rho}$
		for $\g=\sbr{\s}_\g$ and some $\rho$ with $\delta<\rho<\diam(\omega)$ to be determined.
		These cut-off functions $\chi_1,\chi_2$  separate $\Omega$ into an inner region, where
		the vortex filament $\psi_\g$ is small, and an outer region, where
		$\abs{\psi_\g}$ is close to $1$.
		
		We use the localization formula \cite[Eqn. (1.1)]{MR676004},
		$$L=\sum \chi_j L \chi_j-\sum \abs{\grad \chi_j}^2.$$
		If we choose $\abs{\grad \chi}\le \rho^{-1}$, then this formula allows us to write
		the l.h.s. of \eqref{4.9} as
		\begin{equation}
			\label{4.10}
			\inn{L\b\eta}{\b\eta }\ge  \inn{L\chi_1\b\eta}{\chi_1\b\eta}+ \inn{L\chi_2\b\eta}{\chi_2\b\eta}-C_1\rho^{-2}\norm{\eta}_{X^0}^2.
		\end{equation}
		Since $\chi_2$ is supported away from the vortex filament, 
		using the far-off asymptotics in \eqref{2.0},
		the second term in the r.h.s.
		can be bounded from below as $\inn{L\chi_2\b\eta}{\chi_2\b\eta}\ge (1-C_2\eps^2\rho^{-2})\norm{\chi_j\eta}_{X^0}^2$.

		Write $L=L_0+V$, where $L_0$ is the linearized operator at the lift of $\psi^{(1)}$,
		and $V$ is defined by this split. Then by the asymptotics \eqref{2.0},
		we have $\norm{V}_{L^\infty}\le C_2\delta\eps^{-1}$, where $C_2$ is independent of $\rho$.
		This implies 
		\begin{equation}
			\label{4.11}
			\inn{L\b\eta}{\b\eta }\ge  \inn{L_0\chi_1\b\eta}{\chi_1\b\eta}+
			(1-C_2\eps^2\rho^{-2})\norm{\chi_j\eta}_{X^0}^2
			-(C_1\rho^{-2}+C_2\delta\eps^{-1})\norm{\eta}_{X^0}^2.
		\end{equation}
		
		Recall $Q$ is the projection onto the approximate zero modes satisfying \eqref{4.1.2}.
		This, the fact that  $\bar\eta\in \ker  Q_\s$, and the lower bound \eqref{4.7.1} 
		together give
		\begin{equation}
			\label{4.12}
			\inn{L_0\chi_1\b\eta}{\chi_1\b\eta}\ge (\beta-C_3(\delta^{1/2}+\rho^{-1})) \norm{\chi_1\b\eta}^2_{X^0}.
		\end{equation}
		Plugging \eqref{4.12} to \eqref{4.11}, and choosing 
		$\rho=2C_3\beta^{-1}$, we find
		\begin{equation}
			\label{4.13}
			\inn{L\b\eta}{\b\eta }\ge \del{\min\del{\frac{\beta}{2}-C_3\delta^{1/2},1-
					4C_2C_3^2\eps^2\beta^2}-(4C_1C_3^2\beta^2+C_2\delta\eps^{-1})}\norm{\eta}_{X^0}^2.
		\end{equation}
		Since $\beta=O(\abs{\log\eps}^{-1})$, for $\delta=\eps^{1+s},\,s>0$ and all $\eps\ll1$,
		we get the claim \eqref{4.9} from \eqref{4.13}.
	\end{proof}

	\section*{Acknowledgments}
	
	The Author is supported by Danish National Research Foundation grant
	CPH-GEOTOP-DNRF151. 
	
	\section*{Declarations}

	\begin{itemize}
		\item Competing interests: The Author has no conflicts of interest to declare that are relevant to the content of this article.
	\end{itemize}

	\appendix
		
		\section{Properties of the Fr\'echet derivatives}
		In this section we consider the Fr\'echet derivatives of the immersion $f$ defined in \eqref{2.1}. 
		
		\subsection{Basic variational calculus}
		First,  we recall some basic elements of variational derivatives that we used 
		repeatedly.
		For details, see for instance \cite[Appendix C]{MR2431434}
		
		\emph{Fr\'echet derivative}. Let $X,\,Y$ be two Banach spaces, $U$ be an open set in $X$.
		For a map $g:U\subset X\to Y$ and a vector $u\in U$,
		the Fr\'echet derivative $dg(u)$ is a linear map from $X\to Y$ such that
		$g(u+v)-g(u)-dg(u)v=o(\norm{v}_X)$ for every $v\in X$. 
		If $dg(u)$ exists at $u$, then it is unique. If $dg(u)$ exists for every $u\in U$,
		and the map $u\mapsto dg(u)$ is continuous from $U$ to the space of linear operators $L(X,Y)$, then we
		we say $g$ is $C^1$ on $U$. 
		In this case, $dg(u)$ is uniquely given by 
		$$v\mapsto \od{g(u+tv)}{t}\vert_{t=0}\quad (v\in X).$$
		Iteratively, we can define higher order derivatives this way.

		\emph{Gradient and Hessian}. If $X$ is a Hilbert space over scalar field $Y$, then by Riesz representation,
		we can identify $dg(u)$ as an element in $X$, denoted $g'(u)$. The vector
		$g'(u)$ is called the $X$-gradient of $g$. 
		Similarly, we denote $g''(u)$ the second-order Fr\'echet derivative
		$d^2g(u)$. If $g$ is $C^2$, then $g''$ can be identified as a symmetric 
		linear operator
		uniquely determined  by the relation  
		$$\inn{g''(u)v}{w}=\md{g(u+tv+sw)}{2}{t}{}{s}{}\vert_{s=t=0}\quad (v,w\in X).$$
		
		\emph{Expansion}. Let $X$ be a Hilbert space over scalar field $Y$. Suppose $g$ is $C^2$ on $U\subset X$. Define a scalar function $\phi(t):=g(u+tv)$ for vectors
		$v,w$ such that $v+tw\in U$ for every $0\le t\le 1$.
		Then the elementary Taylor expansion at $\phi(1)$ gives 
		$$g(v+w)=g(v)+\inn{g'(v)}{w}+\frac{1}{2}\inn{g''(v)w}{w}+o(\norm{w}^2).$$
		Here we have used the definition of $g'$ and $g''$ from the last paragraph.
		
		\emph{Composition}. Let $\Omega\subset \Rr^d$ be a bounded domain with smooth boundary.
		Fix $r>d/2,\,f\in C^{r+1}(\Rr^n)$. For $u:\Omega\to \Rr^n$,
		define a map $g:u\mapsto f\circ u$. Then $g:H^r(\Omega)\to H^r(\Omega)$,
		and is $C^1$. 
		The Fr\'echet derivative is given by $v\mapsto \grad f\cdot v$.
		
		\subsection{Various uniform estimates}
		In this section we assume $\epsilon\ll1$ in \eqref{1.1}. 
		For two complex numbers, we use the real inner product $\inn{u}{v}=\Re (\bar{u}v)$.
		
		Fix some $\alpha>0$. 
		Using the definition of the immersion $f$ in Section \ref{sec:2.2}, for $\sigma=(\lambda,\gamma)\in \Sigma_{\epsilon^\alpha}$ and $(\mu,\xi)\in Y^k$, we compute the Fre\'echet derivative of $f$ as
		\begin{equation}
			\label{A.1}
			df(\sigma)(\mu,\xi)=e^{i\lambda}(i\mu\psi_\g-\grad_x\psi_\g\cdot \xi).
		\end{equation}
		This is uniformly bounded in $\sigma$ as an operator from $Y^0\to X^0$, since
		$$\begin{aligned}
			\norm{df(\sigma)(\mu,\xi)}_{X^0}&\le
			\mu\norm{\psi_\g}_{X^0}+\norm{\grad_x\psi_\g\cdot\xi}_{X^0}\\
			&\le
			\mu\del{\norm{\psi^{(1)}}_{L^2(\omega)}+O(\epsilon^{2+\alpha/2})}\\
			&+\del{\norm{\grad\psi^{(1)}}_{L^2(\omega)}+O(\epsilon^{1+\alpha/2})}\norm{\xi}_{C^0}\\&\le C(\Omega)\abs{\log\epsilon}^{1/2}\norm{\sigma}_{Y^0}.
		\end{aligned}$$
		Here we have used \eqref{2.0.5} and \eqref{4.3}.
		Using \eqref{A.1} and \eqref{3.3.00}, one can  get a uniform estimate for $d_\sigma df(\sigma):Y^0\to L(Y^0, X^0).$ 
		
		Write an element in $Y^k$ as $\sigma=([\sigma]_\lambda,[\sigma]_\xi)$. 
		The adjoint operator $df(\sigma)^*$ is determined by the relation
		$$\begin{aligned}
			&\inn{ df(\sigma)(\mu,\xi)}{\phi}\\
			&=\int_x\int_z\inn{e^{i\lambda}(i\mu\psi_\g-\grad_x\psi_\g)\cdot \xi}{\phi}\\&=\mu[df(\sigma)^*\phi]_\lambda +\int_z\xi\cdot [df(\sigma)^*\phi]_\gamma\quad (\phi\in X^0).
		\end{aligned}$$
		Here and in the remaining of this section, it is understood that various integrals are
		taken over $(x,z)\in\omega\times I=\Omega$.
		
		By Fubini's Theorem and the identity $\inn{v\cdot w}{ u}=\inn{u}{v}\cdot w$ for real vector $w$, the above relation implies
		\begin{equation}
			\label{A.2}
			df(\sigma)^*\phi=\del{\int_x\int_z\inn{\phi}{ie^{i\lambda}\psi_\g},-\int_x\inn{{e^{i\lambda}\grad_x\psi_\g(x,\cdot)}}{\phi(x,\cdot)}}.
		\end{equation}
		This adjoint operator  is also uniformly bounded in $\sigma$ with $\norm{df(\sigma)^*}_{X^0\to Y^0}
		\le C\abs{\log\epsilon}^{1/2}.$ Moreover, by Sobolev embedding,  $df(\sigma)^*$ maps $X^2$ into $Y^0$.  
		Using \eqref{A.2} and \eqref{3.3.00}, one can  get a uniform estimate for $d_\sigma df(\sigma)^*:Y^0\to L(X^0, Y^0).$ 
		
		The operator $\J_\sigma$ is defined in \eqref{2.3}.
		It induces a symplectic form w.r.t. the inner product
		\eqref{2.00Y} on the tangent bundle $T\Sigma$, since
		$$\inn{\J_\sigma\chi}{\chi}=\inn{g_\sigma^*J^{-1}g_\sigma\chi}{\chi}=\inn{J^{-1}g_\sigma\chi}{g_\sigma\chi}=0\quad (\chi\in Y^0).$$
		Using \eqref{A.1}-\eqref{A.2}, we can compute $\J_\sigma$ explicitly as
		\begin{equation}
			\label{A.3}
			\begin{split}
				&[\J_\sigma(\mu,\xi)]_\lambda =-\int_x\int_z\inn{\grad_x\psi_\g\cdot\xi}{\psi_\g},\\
				&[\J_\sigma(\mu,\xi)]_\g =\mu\int_x \inn{\grad_x\psi_\g(x,\cdot)}{\psi_\g(x,\cdot)}+\int_x \inn{\grad_x\psi_\g(x,\cdot)\cdot J\xi(\cdot)}{\grad_x\psi_\g(x,\cdot)}.
			\end{split}
		\end{equation}
		Here we have used the identity that for any complex-valued $C^1$ function $\phi$ and vector field $\chi$ in $\Rr^2$,
		by the Cauchy-Riemann equation, 
		$$-i\grad\phi\cdot \chi=\grad{\phi}\cdot J\chi.$$
		
		One can also write $\J_\sigma=\J_\sigma(\mu,\xi(z))$ as the multiplication operator by the matrix
		$B_{ij}$, where
		\begin{equation}
			\label{A.4}
			\begin{aligned}
				&B_{ii}=0\quad(i=1,2,3),\\
				&B_{12}=-\int_x\int_z\inn{\pd{\psi_\g}{x_1}(x,z)}{\psi_\g(x,z)},\\
				&B_{13}=-\int_x\int_z\inn{\pd{\psi_\g}{x_2}(x,z)}{\psi_\g(x,z)},\\
				&B_{21}=\int_x\inn{\pd{\psi_\g}{x_1}(x,z)}{\psi_\g(x,z)},\\
				&B_{23}=\int_x\inn{\pd{\psi_\g}{x_1}(x,z)}{\pd{\psi_\g}{x_1}(x,z)},\\
				&B_{31}=\int_x\inn{\pd{\psi_\g}{x_2}(x,z)}{\psi_\g(x,z)},\\
				&B_{32}=-\int_x\inn{\pd{\psi_\g}{x_2}(x,z)}{\pd{\psi_\g}{x_2}(x,z)}.
			\end{aligned}
		\end{equation}
		Here we have used the Cauchy-Riemann equation to eliminate certain cross-terms of the form $\inn{\pd{\psi_\g}{x_1}}{\pd{\psi_\g}{x_2}}$.
		
		Using \eqref{A.4}, the fact $\norm{\grad\psi^{(1)}}_{L^2(\omega)}\sim C(\omega)\abs{\log\epsilon}^{1/2}$ (see \eqref{2.0.5} and 
		\cite[Chap. V.1]{MR1269538}), and the asymptotics \eqref{2.0} for $\psi^{(1)}$,
		one can check that $\J_\sigma$ is invertible and satisfies  the uniform
		estimates
		\begin{align}	
			&\norm{\J_\sigma}_{Y^k\to Y^k}\le C(\omega)\abs{\log\epsilon}^2\label{A.5},\\
			&\norm{\J_\sigma^{-1}}_{Y^k\to Y^k}\le C(\omega)\abs{\log\epsilon}^{-2}\label{A.6}
		\end{align} 
		for every $k\in\N$.

	\bibliography{bibfile}

\end{document}